\newtheorem{theorem}{Theorem}[section]
\newtheorem{lemma}{Lemma}[section]
\newtheorem{corollary}{Corollary}[section]
\newtheorem{remark}{Remark}[section]
\begin{document}
\title{Fully Computable Error Bounds for Eigenvalue Problem\footnote{The work
of Hehu Xie is supported in part by the National Natural Science Foundations
of China (NSFC 91330202, 11371026, 11001259, 11031006, 2011CB309703),
the National Center for Mathematics and Interdisciplinary Science
the national Center for Mathematics and Interdisciplinary Science, CAS.}}
\author{Hehu Xie\footnote{LSEC, ICMSEC, Academy of Mathematics and Systems Science,
Chinese Academy of Sciences, Beijing 100190, P.R. China (hhxie@lsec.cc.ac.cn)},\ \
Meiling Yue\footnote{LSEC, ICMSEC, Academy of Mathematics and Systems Science,
Chinese Academy of Sciences, Beijing 100190, P.R. China (yuemeiling@lsec.cc.ac.cn)}\ \ \
and \  Ning Zhang\footnote{LSEC, ICMSEC, Academy of Mathematics and Systems Science,
Chinese Academy of Sciences, Beijing 100190, P.R. China (zhangning114@lsec.cc.ac.cn)}}
\date{}
\maketitle
\begin{abstract}
This paper is concerned with the computable error estimates for the eigenvalue
problem which is solved by the general conforming finite element methods on the
general meshes. Based on the computable error estimate, we can give
an asymptotically lower bound of the general eigenvalues. Furthermore, we also give a guaranteed
upper bound of the error estimates for the first eigenfunction approximation and
a guaranteed lower bound of the first eigenvalue based on
computable error estimator. Some numerical examples are presented to validate the theoretical results
deduced in this paper.

\vskip0.3cm {\bf Keywords.} Eigenvalue problem, computable error estimate,
guaranteed upper bound, guaranteed lower bound, complementary method.

\vskip0.2cm {\bf AMS subject classifications.} 65N30, 65N25, 65L15, 65B99.
\end{abstract}

\section{Introduction}
This paper is concerned with the computable error estimates for the eigenvalue problem by the finite element
method. As we know, the priori error estimates can only give the asymptotic convergence order. The a posteriori
error estimates are very important for the mesh adaption process. About the a posteriori error estimate for the
partial differential equations by the finite element method, please refer to
\cite{AinsworthOden,BabuskaRheinboldt_1,BabuskaRheinboldt_2,BrennerScott,NeittaanmakiRepin,Repin,Verfurth}
 and the references cited therein.

It is well known that the numerical approximations by the conforming finite element methods are upper
bounds of the exact eigenvalues. Recently, how to obtain the lower bounds of the desired eigenvalues
is a hot topic since it has many applications
in some classical problems \cite{ArmentanoDuran,CarstensenGallistl,CarstensenGedicke,HuHuangLin,
LinLuoXie_lowerbound,LinXie_lowerbound,LinXieLuoLiYang,Liu,LiuOishi,SebestovaVejchodsky,YangZhangLin,ZhangYangChen}.
So far, there have developed the nonconforming finite element methods, interpolation
constant based methods and computational error estimate methods. The nonconforming finite element methods can only
obtain the asymptotically lower bounds with the lowest order accuracy.
The interpolation constant method can only obtain the efficient lowest order accuracy on the quasi-uniform meshes.
The interesting computational error method need a condition that the numerical approximation is closer to
the first eigenvalue than the second one. But the paper \cite{SebestovaVejchodsky} gives a clue to us.

This paper is to give computable error estimates for the eigenpair approximations. We produce a guaranteed
upper-bound error estimate for the first eigenfunction approximation and then a
guaranteed lower bound of the first eigenvalue.  The approach is based on complementary energy method from
\cite{HaslingerHlavacek,NeittaanmakiRepin,Repin,Vejchodsky_1,Vejchodsky_2} coupled with the upper and lower
 bounds of the eigenvalues by the conforming and nonconforming finite element methods.
The first eigenvalue is the key information in many practical applications such as Friedrichs, Poincar\'{e}, trace and
similar inequalities (cf. \cite{SebestovaVejchodsky}). Thus the two-sided bounds of the first eigenvalue of the partial
differential operators are very important.
Further, the proposed computable error estimates are asymptotically exact for the general eigenpair approximations
which are obtained by the conforming finite element method.
Based on this property, we can provide asymptotically lower bounds for general eigenvalues by the finite element method.
The most important feature and contribution of this paper are that the method can also provide the
reasonable accuracy even on the general regular meshes which is different from the existed methods.


An outline of the paper goes as follows. In Section \ref{Section_FEM}, we introduce the
finite element method for the eigenvalue problem and the corresponding basic
error estimates.  The computable error estimates for the eigenfunction approximations
and the corresponding upper-bound properties are given in Section \ref{Section_Upper_Bound}.
In Section \ref{Section_Lower_Bound}, lower bounds of eigenvalues are obtained
based on the results in Section \ref{Section_Upper_Bound}. Some numerical examples are presented
 to validate our theoretical analysis in Section \ref{Section_Numerical_Examples}.
Some concluding remarks are given in the last section.

\section{Finite element method for eigenvalue problem}\label{Section_FEM}
This section is devoted to introducing some notation and the finite element
method for eigenvalue problem. In this paper, the standard notation
for Sobolev spaces $H^s(\Omega)$ and $H({\rm div};\Omega)$ and their
associated norms and semi-norms \cite{Adams} will be used. We denote
$H_0^1(\Omega)=\{v\in H^1(\Omega):\ v|_{\partial\Omega}=0\}$,
where $v|_{\partial\Omega}=0$ is in the sense of trace. The letter $C$ (with or without subscripts)
denotes a generic positive constant which may be different at its different occurrences
 in  the paper.

For simplicity, this paper is concerned with the following model problem:
Find $(\lambda, u)$ such that
\begin{equation}\label{LaplaceEigenProblem}
\left\{
\begin{array}{rcl}
-\Delta u+ u&=&\lambda u, \quad {\rm in} \  \Omega,\\
u&=&0, \ \  \quad {\rm on}\  \partial\Omega,
\end{array}
\right.
\end{equation}
where $\Omega\subset\mathcal{R}^d$ $(d=2,3)$ is a bounded domain with
Lipschitz boundary $\partial\Omega$ and $\Delta$ denotes the Laplacian operator.
We will find that the method in this paper can easily be extended to more general
eigenvalue problems.

In order to use the finite element method to solve
the eigenvalue problem (\ref{LaplaceEigenProblem}), we need to define
the corresponding variational form as follows:
Find $(\lambda, u )\in \mathcal{R}\times V$ such that 
\begin{eqnarray}\label{weak_eigenvalue_problem}
a(u,v)&=&\lambda b(u,v),\quad \forall v\in V,
\end{eqnarray}
where $V:=H_0^1(\Omega)$ and
\begin{equation}\label{inner_product_a_b}
a(u,v)=\int_{\Omega}\big(\nabla u\cdot\nabla v + uv\big)d\Omega,
 \ \ \ \  \ \ b(u,v) = \int_{\Omega}uv d\Omega.
\end{equation}
The norms $\|\cdot\|_a$ and $\|\cdot\|_b$ are defined by
\begin{eqnarray*}
\|v\|_a=\sqrt{a(v,v)}\ \ \ \ \ {\rm and}\ \ \ \ \ \|v\|_b=\sqrt{b(v,v)}.
\end{eqnarray*}
It is well known that the eigenvalue problem (\ref{weak_eigenvalue_problem})
  has an eigenvalue sequence $\{\lambda_j \}$ (cf. \cite{BabuskaOsborn_Book,Chatelin}):
$$0<\lambda_1 < \lambda_2\leq\cdots\leq\lambda_k\leq\cdots,\ \ \
\lim_{k\rightarrow\infty}\lambda_k=\infty,$$ and associated
eigenfunctions
$$u_1, u_2, \cdots, u_k, \cdots,$$
where $b(u_i,u_j)=0$ when $i\neq j$. The first eigenvalue $\lambda_1$ is simple and
in the sequence $\{\lambda_j\}$, the $\lambda_j$ are repeated according to their
geometric multiplicity.

Now, we introduce the finite element method for the eigenvalue problem
(\ref{weak_eigenvalue_problem}). First we decompose the computing domain
$\Omega\subset \mathcal{R}^d\ (d=2,3)$
into shape-regular triangles or rectangles for $d=2$ (tetrahedrons or
hexahedrons for $d=3$) to produce the mesh $\mathcal{T}_h$ (cf. \cite{BrennerScott,Ciarlet}).
In this paper, we use $\mathcal{E}_h$ to denote the set of interior faces (edges or sides)
of $\mathcal{T}_h$.
The diameter of a cell $K\in\mathcal{T}_h$ is denoted by $h_K$ and
the mesh size $h$ describes  the maximum diameter of all cells
$K\in\mathcal{T}_h$. Based on the mesh $\mathcal{T}_h$, we can
construct a finite element space denoted by $V_h \subset V$.
For simplicity, we only consider the Lagrange type conforming finite element space
which is defined as follows
\begin{equation}\label{linear_fe_space}
V_h = \big\{ v_h \in C(\Omega)\ \big|\ v_h|_{K} \in \mathcal{P}_k,
\ \ \forall K \in \mathcal{T}_h\big\}\cap H_0^1(\Omega),
\end{equation}
where $\mathcal{P}_k$ denotes the space of polynomials of degree at most $k$.

We define the standard finite element scheme for the eigenvalue
 problem (\ref{weak_eigenvalue_problem}) as follows:
Find $(\lambda_h, u_h)\in \mathcal{R}\times V_h$
such that $b(u_h,u_h)=1$ and
\begin{eqnarray}\label{Weak_Eigenvalue_Discrete}
a(u_h,v_h)
&=&\lambda_h b(u_h,v_h),\quad\ \  \ \forall v_h\in V_h.
\end{eqnarray}
From \cite{BabuskaOsborn_1989,BabuskaOsborn_Book,Chatelin},
the discrete eigenvalue problem (\ref{Weak_Eigenvalue_Discrete}) has eigenvalues:
$$0<\lambda_{1,h}<\lambda_{2,h}\leq \cdots\leq \lambda_{k,h}
\leq\cdots\leq\lambda_{N_h,h},$$
and corresponding eigenfunctions
$$u_{1,h},\cdots, u_{k,h}, \cdots, u_{N_h,h},$$
where $b(u_{i,h},u_{j,h})=\delta_{ij}$ ($\delta_{ij}$ denotes the Kronecker function),
 when $1\leq i, j\leq N_h$ ($N_h$ is
the dimension of the finite element space $V_h$).

Let $M(\lambda_i)$ denote the eigenspace corresponding to the
eigenvalue $\lambda_i$ which is defined by
\begin{eqnarray*}
M(\lambda_i)&=&\big\{w\in H_0^1(\Omega): w\ {\rm is\ an\ eigenfunction\ of\
(\ref{weak_eigenvalue_problem})}\nonumber\\
&&\ \ \ \ \ \ \ \ \ \ \ \ \ \ \ \  {\rm corresponding\ to}\ \lambda_i \big\},
\end{eqnarray*}
and define
\begin{eqnarray}
\delta_h(\lambda_i)=\sup_{w\in M(\lambda_i), \|w\|_b=1}\inf_{v_h\in
V_h}\|w-v_h\|_{a}.
\end{eqnarray}

We also define the following quantity:
\begin{eqnarray}
\eta_{a}(h)&=&\sup_{f\in L^2(\Omega),\|f\|_b=1}\inf_{v_h\in V_h}\|Tf-v_h\|_{a},\label{eta_a_h_Def}
\end{eqnarray}
where $T:L^2(\Omega)\rightarrow V$ is defined as
\begin{equation}\label{laplace_source_operator}
a(Tf,v) = b(f,v), \ \ \ \ \  \forall f \in L^2(\Omega) \ \ \  {\rm and}\  \ \ \forall v\in V.
\end{equation}

Then the error estimates for the eigenpair approximations by the finite
element method can be described as follows.
\begin{lemma}(\cite[Lemma 3.6, Theorem 4.4]{BabuskaOsborn_1989} and \cite{Chatelin})
\label{Err_Eigen_Global_Lem}
There exists the exact eigenpair $(\lambda_i,u_i)$ of (\ref{weak_eigenvalue_problem}) such that
each eigenpair approximation
$(\lambda_{i,h},u_{i,h})$ $(i = 1, 2, \cdots, N_h)$ of
(\ref{Weak_Eigenvalue_Discrete}) has the following error estimates
\begin{eqnarray}
\|u_i-u_{i,h}\|_{a}
&\leq& \big(1+C_i\eta_a(h)\big)\delta_h(\lambda_i),\label{Err_Eigenfunction_Global_1_Norm} \\
\|u_i-u_{i,h}\|_{b}
&\leq& C_i\eta_{a}(h)\|u_i - u_{i,h}\|_{a},\label{Err_Eigenfunction_Global_0_Norm}\\
|\lambda_i-\lambda_{i,h}|
&\leq&C_i\|u_i - u_{i,h}\|_{a}^2\leq C_i\eta_a(h)\|u_i-u_{i,h}\|_a.\label{Estimate_Eigenvalue}
\end{eqnarray}
Here and hereafter $C_i$ is some constant depending on $i$ but independent of  the mesh size $h$.
\end{lemma}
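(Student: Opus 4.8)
The plan is to reduce everything to the standard spectral approximation theory of Babuška–Osborn, applied to the compact, selfadjoint, positive operator $T:L^2(\Omega)\to V$ defined in \eqref{laplace_source_operator}, whose eigenpairs are $(1/\lambda_i,u_i)$, and to the discrete analogue $T_h:=P_h T$ (or equivalently $a(T_hf,v_h)=b(f,v_h)$ for all $v_h\in V_h$), whose eigenpairs are $(1/\lambda_{i,h},u_{i,h})$. The quantity $\eta_a(h)$ in \eqref{eta_a_h_Def} is precisely $\|(T-T_h)|_{L^2}\|_{L^2\to a}$ up to the best-approximation bound, and $\delta_h(\lambda_i)$ measures how well $V_h$ approximates the eigenspace $M(\lambda_i)$ in the $\|\cdot\|_a$ norm; both tend to $0$ as $h\to 0$ by density of $V_h$ in $V$.

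First I would recall the abstract estimates from \cite[Lemma 3.6, Theorem 4.4]{BabuskaOsborn_1989}: for a simple or multiple eigenvalue there is an exact eigenpair $(\lambda_i,u_i)$ such that the discrete eigenfunction $u_{i,h}$ satisfies $\|u_i-u_{i,h}\|_a\le (1+C_i\|(T-T_h)\|)\,\mathrm{dist}_a(u_i,V_h)$, which after taking the supremum over the $\|\cdot\|_b$-normalized eigenspace gives \eqref{Err_Eigenfunction_Global_1_Norm}. For \eqref{Err_Eigenfunction_Global_0_Norm} I would use the Aubin–Nitsche / duality argument: writing $e=u_i-u_{i,h}$, one has $\|e\|_b^2=b(e,e)=a(Te,e)$, and inserting the best approximation $v_h\in V_h$ to $Te$ together with Galerkin orthogonality of $e$ against $V_h$ in a suitable sense yields $\|e\|_b\le \eta_a(h)\|e\|_a$ up to the constant $C_i$; the $C_i$ absorbs the $O(\|e\|_b)$ cross terms coming from the fact that $u_{i,h}$ is only an eigenfunction of the discrete problem, not of the continuous one. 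For the eigenvalue estimate \eqref{Estimate_Eigenvalue}, I would use the classical identity
\begin{eqnarray*}
\lambda_{i,h}-\lambda_i = \|u_i-u_{i,h}\|_a^2 - \lambda_i\|u_i-u_{i,h}\|_b^2
\end{eqnarray*}
(valid when $b(u_{i,h},u_{i,h})=1$ and $u_i$ is chosen appropriately normalized), which shows $|\lambda_i-\lambda_{i,h}|\le \|e\|_a^2 + \lambda_i\|e\|_b^2\le C_i\|e\|_a^2$; combining with \eqref{Err_Eigenfunction_Global_0_Norm} and $\|e\|_b\le\|e\|_a$ (or rather $\|e\|_a\le\|e\|_b$-type control after noting $\eta_a(h)\|e\|_a$ dominates) gives the second inequality in \eqref{Estimate_Eigenvalue}. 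Since all three displays are exactly the content of the cited Babuška–Osborn results specialized to the shift $-\Delta+1$, the cleanest route is to verify the hypotheses of that theorem — $T$ compact and selfadjoint on $L^2$, $T_h\to T$ in norm, which holds because $\|(T-T_h)f\|_a=\mathrm{dist}_a(Tf,V_h)\to 0$ uniformly on the unit ball of $L^2$ by compactness of $T$ — and then quote it.

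The main obstacle is bookkeeping rather than depth: one must be careful that the constants $C_i$ genuinely depend only on $i$ (through the separation of $\lambda_i$ from the rest of the spectrum and through $\lambda_i$ itself) and not on $h$, which requires the norm convergence $\|T-T_h\|\to 0$ so that for $h$ small the $i$-th discrete eigenvalue is well-separated; and one must handle the multiple-eigenvalue case, where "there exists an exact eigenpair" is essential and the best-approximation quantity must be replaced by the gap $\delta_h(\lambda_i)$ between the whole eigenspace $M(\lambda_i)$ and $V_h$. Because the paper only needs this lemma as a citable black box for the asymptotic exactness results of Section \ref{Section_Upper_Bound}, I would keep the argument short: state the reduction to $(T,T_h)$, note norm convergence, and invoke \cite[Lemma 3.6, Theorem 4.4]{BabuskaOsborn_1989} and \cite{Chatelin} for the three inequalities, adding the one-line eigenvalue identity above to make \eqref{Estimate_Eigenvalue} self-contained.
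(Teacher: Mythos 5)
Your proposal is correct and takes essentially the same route as the paper: the paper offers no proof of this lemma at all, simply quoting \cite[Lemma 3.6, Theorem 4.4]{BabuskaOsborn_1989} and \cite{Chatelin}, which is exactly the reduction to the solution operator $T$ and its Galerkin approximation $T_h=P_hT$ that you describe, with $\eta_a(h)=\|T-T_h\|_{\mathcal{L}(L^2(\Omega),V)}$ and the Rayleigh-quotient identity (which the paper records separately as Lemma \ref{Rayleigh_Quotient_error_theorem}) handling the eigenvalue bound. Your additional bookkeeping on norm convergence and the multiple-eigenvalue case is consistent with the cited results and introduces no gap.
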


\section{Complementarity based error estimate}\label{Section_Upper_Bound}
In this section, we derive a computable error estimate for the eigenfunction approximations
based on complementarity approach. A guaranteed upper bound of the error estimate for the first
 eigenfunction approximation is designed based on the lower bounds of the second eigenvalue.
 We also produce an asymptotically upper bound error estimate for the general eigenfunction approximations
  which are obtained by solving the discrete eigenvalue problem (\ref{Weak_Eigenvalue_Discrete}).

%


First, we recall the following divergence theorem
\begin{eqnarray}\label{Divergence_Equality}
\int_{\Omega}v{\rm div}\mathbf zd\Omega+\int_{\Omega}\mathbf z\cdot\nabla vd\Omega
=\int_{\partial\Omega}v\mathbf z\cdot\nu ds,\ \ \
\forall v\in V,\ \forall \mathbf z\in \mathbf W,
\end{eqnarray}
where $\mathbf W:=H({\rm div};\Omega)$ and $\nu$ denotes the unit outward normal to $\partial\Omega$.

We first give a guaranteed upper bound of the error estimate for the first eigenfunction
approximation and the method used here is independent from the way to obtain the solution.
We only consider the eigenfunction approximation $\widehat u_1\in V$ and estimate the error $e=u_1-\widehat u_1$
no matter how to obtain $\widehat u_1$. In this paper, we let $b(\widehat u_1,\widehat u_1)=1$
and the eigenvalue approximation $\widehat\lambda_1$
is determined as follows
\begin{eqnarray*}
\widehat\lambda_1=\frac{a(\widehat u_1,\widehat u_1)}{b(\widehat u_1,\widehat u_1)}=a(\widehat u_1,\widehat u_1).
\end{eqnarray*}
\begin{theorem}\label{Theorem_Upper_Bound}
Assume we have an eigenpair approximation $(\widehat\lambda_1,\widehat u_1)\in \mathcal{R}\times V$
corresponding to the first eigenvalue $\lambda_1$ and a lower bound eigenvalue approximation $\lambda_{2}^L$ of
the second eigenvalue $\lambda_2$ such that $\lambda_1\leq \widehat\lambda_1 < \lambda_2^L\leq \lambda_2$.
There exists an exact eigenfunction $u_1\in M(\lambda_1)$ such that the error estimate for
the first eigenfunction approximation $\widehat u_1\in V$ with $b(\widehat u_1,\widehat u_1)=1$ has the following guaranteed upper bound
\begin{eqnarray}\label{Upper_Bound}
\|u_1-\widehat u_1\|_a&\leq&  \frac{\lambda_{2}^L}{\lambda_{2}^L-\widehat \lambda_1}\eta(\widehat \lambda_1,\widehat u_1,\mathbf y),
\ \ \ \forall \mathbf y\in \mathbf W,
\end{eqnarray}
where $\eta(\widehat\lambda_1,\widehat u_1,\mathbf y)$ is defined as follows
\begin{eqnarray}\label{Definition_Eta}
\eta(\widehat\lambda_1,\widehat u_1,\mathbf y):=\big(\|\widehat\lambda_1\widehat u_1-\widehat u_1+{\rm div}\mathbf y\|_0^2+\|\mathbf y-\nabla \widehat u_1\|_0^2\big)^{1/2}.
\end{eqnarray}
\end{theorem}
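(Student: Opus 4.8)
The plan is to combine a Prager--Synge / complementarity residual identity with the spectral gap between $\lambda_1$ and $\lambda_2$. The pivotal preliminary step is to choose the right representative $u_1\in M(\lambda_1)$: since $\lambda_1$ is simple, I take $u_1$ to be the $b$-orthogonal projection of $\widehat u_1$ onto $M(\lambda_1)$ (concretely, $u_1=b(\varphi_1,\widehat u_1)\varphi_1$ with $\varphi_1$ a $b$-normalized first eigenfunction), so that the error $e:=u_1-\widehat u_1$ satisfies $b(e,w)=0$ for all $w\in M(\lambda_1)$. This $u_1$ is a genuine eigenfunction, i.e. nonzero: otherwise $\widehat u_1$ would be $b$-orthogonal to $M(\lambda_1)$, forcing $\widehat\lambda_1=a(\widehat u_1,\widehat u_1)\ge\lambda_2\ge\lambda_2^L$, contradicting the hypothesis $\widehat\lambda_1<\lambda_2^L$. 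Moreover, by the variational (min--max) characterization of $\lambda_2$ on the $b$-orthogonal complement of $M(\lambda_1)$, we get $a(e,e)\ge\lambda_2\,b(e,e)\ge\lambda_2^L\,b(e,e)$, i.e. $\|e\|_b^2\le(\lambda_2^L)^{-1}\|e\|_a^2$.

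Next I would establish the residual identity. For arbitrary $v\in V$ and $\mathbf y\in\mathbf W$, I start from $a(e,v)=a(u_1,v)-a(\widehat u_1,v)=\lambda_1 b(u_1,v)-a(\widehat u_1,v)$, rewrite the gradient term in $a(\widehat u_1,v)$ via the divergence theorem (\ref{Divergence_Equality}) (the boundary integral vanishes since $v|_{\partial\Omega}=0$) to obtain $a(\widehat u_1,v)=\int_\Omega(\nabla\widehat u_1-\mathbf y)\cdot\nabla v\,d\Omega+\int_\Omega(\widehat u_1-{\rm div}\,\mathbf y)v\,d\Omega$, then add and subtract $\widehat\lambda_1\widehat u_1$ and substitute $\widehat u_1=u_1-e$. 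This yields
\[
a(e,v)=R(v)+(\lambda_1-\widehat\lambda_1)\,b(u_1,v)+\widehat\lambda_1\,b(e,v),
\]
where $R(v):=\int_\Omega(\widehat\lambda_1\widehat u_1-\widehat u_1+{\rm div}\,\mathbf y)v\,d\Omega+\int_\Omega(\mathbf y-\nabla\widehat u_1)\cdot\nabla v\,d\Omega$. Applying the Cauchy--Schwarz inequality (the vector version on the flux term, then the discrete one) and using $\|v\|_a^2=\|\nabla v\|_0^2+\|v\|_0^2$ gives $|R(v)|\le\eta(\widehat\lambda_1,\widehat u_1,\mathbf y)\,\|v\|_a$.

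Finally I set $v=e$. The projection choice makes $b(u_1,e)=0$, so the middle term drops out and the identity collapses to $\|e\|_a^2-\widehat\lambda_1\|e\|_b^2=R(e)\le\eta(\widehat\lambda_1,\widehat u_1,\mathbf y)\,\|e\|_a$. Inserting the spectral-gap bound $\|e\|_b^2\le(\lambda_2^L)^{-1}\|e\|_a^2$ from the first step, the left-hand side is at least $\big(1-\widehat\lambda_1/\lambda_2^L\big)\|e\|_a^2=\tfrac{\lambda_2^L-\widehat\lambda_1}{\lambda_2^L}\|e\|_a^2$, which is strictly positive by the assumption $\widehat\lambda_1<\lambda_2^L$; dividing by $\|e\|_a$ (the case $e=0$ being trivial) gives (\ref{Upper_Bound}) for every $\mathbf y\in\mathbf W$. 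The one genuine obstacle is the choice of $u_1$: with the naive $b$-normalized first eigenfunction, $e$ would keep a component in $M(\lambda_1)$, the term $(\lambda_1-\widehat\lambda_1)b(u_1,e)$ would not vanish, and — more seriously — the inequality $\|e\|_b^2\le\lambda_2^{-1}\|e\|_a^2$ would fail; projecting $\widehat u_1$ onto $M(\lambda_1)$ is exactly what removes both difficulties, after which the remaining work is routine bookkeeping with the divergence theorem and Cauchy--Schwarz.
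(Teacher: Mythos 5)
Your proposal is correct and follows essentially the same route as the paper: the same $b$-orthogonal choice of $u_1\in M(\lambda_1)$, the same divergence-theorem rewriting of the residual leading to $a(e,e)-\widehat\lambda_1 b(e,e)\le\eta(\widehat\lambda_1,\widehat u_1,\mathbf y)\|e\|_a$, and the same spectral-gap bound $\|e\|_b^2\le(\lambda_2^L)^{-1}\|e\|_a^2$. You are in fact slightly more careful than the paper, which silently drops the $(\lambda_1-\widehat\lambda_1)b(u_1,e)$ term and does not remark that the projected $u_1$ is nonzero.
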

\begin{proof}
We can choose $u_1\in M(\lambda_1)$ such that $b(v,u_1-\widehat u_1)=0$ for any $v\in M(\lambda_1)$.
Now we set $w=u_1-\widehat u_1$ and the following estimates hold
\begin{eqnarray}\label{Inequality_1}
&&a(u_1-\widehat u_1,w)-\widehat\lambda_1b(u_1-\widehat u_1,w)\nonumber\\
&=& \int_{\Omega}\lambda_1 u_1w d\Omega-\int_{\Omega}\nabla \widehat u_1\cdot\nabla wd\Omega
-\int_{\Omega}\widehat u_1wd\Omega-\widehat\lambda_1\int_{\Omega}u_1wd\Omega\nonumber\\
&&\ \ +\widehat\lambda_1\int_{\Omega}\widehat u_1wd\Omega+\int_{\Omega}w{\rm div}\mathbf yd\Omega
+\int_{\Omega}\mathbf y\cdot\nabla wd\Omega\nonumber\\
&=&\int_{\Omega}\big(\widehat\lambda_1\widehat u_1-\widehat u_1+{\rm div}\mathbf y\big)wd\Omega
+\int_{\Omega}\big(\mathbf y-\nabla \widehat u_1\big)\cdot\nabla wd\Omega\nonumber\\
&\leq& \|\widehat\lambda_1\widehat u_1-\widehat u_1+{\rm div}\mathbf y\|_0\|w\|_0+\|\mathbf y-\nabla \widehat u_1\|_0\|\nabla w\|_0\nonumber\\
&\leq& \big(\|\widehat\lambda_1\widehat u_1-\widehat u_1+{\rm div}\mathbf y\|_0^2+\|\mathbf y-\nabla \widehat u_1\|_0^2\big)^{1/2}\|w\|_a,
\ \ \ \forall\mathbf y\in \mathbf W.
\end{eqnarray}
Since $b(v,u_1-\widehat u_1)=0$ for any $v\in M(\lambda_1)$, the following inequalities hold
\begin{eqnarray}\label{Inequality_2}
\frac{\|w\|_a^2}{\|w\|_b^2}\geq \lambda_2\geq \lambda_{2}^L.
\end{eqnarray}
Combining (\ref{Inequality_1}) and (\ref{Inequality_2}) leads to the following estimate
\begin{eqnarray*}
\Big(1-\frac{\widehat\lambda_1}{\lambda_{2}^L}\Big)\|w\|_a^2 &\leq& \eta(\widehat\lambda_1,\widehat u_1,\mathbf y)\|w\|_a,
\ \ \ \forall\mathbf y\in \mathbf W.
\end{eqnarray*}
It means that we have
\begin{eqnarray*}
\|w\|_a&\leq& \frac{\lambda_{2}^L}{\lambda_{2}^L-\widehat\lambda_1}\eta(\widehat\lambda_1,\widehat u_1,\mathbf y),
\ \ \ \forall\mathbf y\in\mathbf W.
\end{eqnarray*}
This is the desired result (\ref{Upper_Bound}) and the proof is complete.
\end{proof}

A natural problem is to seek the minimization $\eta(\widehat\lambda,\widehat u,\mathbf y)$ over $\mathbf W$ for the fixed
eigenpair approximation $(\widehat\lambda,\widehat u)$. For this aim, we define the minimization problem:
Find $\mathbf y^*\in\mathbf  W$ such that
\begin{eqnarray}
\eta(\widehat\lambda, \widehat u,\mathbf y^*) \leq \eta(\widehat \lambda, \widehat u,\mathbf y),
\ \ \ \ \forall \mathbf y\in \mathbf W.
\end{eqnarray}
From \cite{Vejchodsky_1,Vejchodsky_2}, the optimization problem is equivalent to the following partial differential equation:
Find $\mathbf y^*\in \mathbf W$ such that
\begin{eqnarray}\label{Dual_Problem}
a^*(\mathbf y^*,\mathbf z)&=& \mathcal F^*(\widehat\lambda,\widehat u, \mathbf z),
\ \ \ \ \forall \mathbf z\in \mathbf W,
\end{eqnarray}
where
\begin{eqnarray*}
a^*(\mathbf y^*,\mathbf z)=\int_{\Omega}\big({\rm div}\mathbf y^*{\rm div}\mathbf z
+\mathbf y^*\cdot\mathbf z\big)d\Omega, \ \  \
\mathcal F^*(\widehat \lambda,\widehat u, \mathbf z)=-\int_{\Omega}\widehat\lambda\widehat u{\rm div}\mathbf zd\Omega.
\end{eqnarray*}
It is obvious $a^*(\cdot,\cdot)$ is an inner product in the space $\mathbf W$ and the corresponding norm
is $\||\mathbf z\||_{*}=\sqrt{a^*(\mathbf z,\mathbf z)}$. From the Riesz theorem,
we can know the dual problem (\ref{Dual_Problem})
has a unique solution.

Now, we state some properties for the estimator $\eta(\widehat \lambda,\widehat u,\mathbf y)$.
\begin{lemma}\label{Optimization_Property_Lemma}
Assume $\mathbf y^*$ be the solution of the dual problem (\ref{Dual_Problem}) and
 let $\widehat \lambda\in \mathcal{R}$,
$\widehat u\in V$ and $\mathbf y\in\mathbf W$ be arbitrary. Then the following equality holds
\begin{eqnarray}\label{Optimization_Property}
\eta^2(\widehat \lambda,\widehat u,\mathbf y)&=&\eta^2(\widehat \lambda,\widehat u,\mathbf y^*)
+\||\mathbf y^*-\mathbf y\||_*^2.
\end{eqnarray}
\end{lemma}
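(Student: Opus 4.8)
The plan is to expand $\eta^2(\widehat\lambda,\widehat u,\mathbf y)$ around the optimizer $\mathbf y^*$ by writing $\mathbf y = \mathbf y^* + (\mathbf y - \mathbf y^*)$ and using bilinearity, so that the cross term can be shown to vanish by the very equation (\ref{Dual_Problem}) characterizing $\mathbf y^*$. Concretely, observe that
\begin{eqnarray*}
\eta^2(\widehat\lambda,\widehat u,\mathbf y)
&=&\|\widehat\lambda\widehat u-\widehat u+{\rm div}\mathbf y\|_0^2+\|\mathbf y-\nabla\widehat u\|_0^2.
\end{eqnarray*}
I would substitute $\mathbf y=\mathbf y^*+\mathbf z$ with $\mathbf z:=\mathbf y-\mathbf y^*\in\mathbf W$ into both squared norms. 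Each norm expands into the value at $\mathbf y^*$, a linear cross term in $\mathbf z$, and a pure quadratic term in $\mathbf z$. The pure quadratic contribution collects to $\|{\rm div}\mathbf z\|_0^2+\|\mathbf z\|_0^2 = a^*(\mathbf z,\mathbf z) = \||\mathbf z\||_*^2 = \||\mathbf y^*-\mathbf y\||_*^2$, which is exactly the last term in (\ref{Optimization_Property}). So the whole statement reduces to showing that the linear-in-$\mathbf z$ cross terms sum to zero.

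The cross terms, after collecting, are
\begin{eqnarray*}
2\int_{\Omega}\big(\widehat\lambda\widehat u-\widehat u+{\rm div}\mathbf y^*\big){\rm div}\mathbf z\,d\Omega
+2\int_{\Omega}\big(\mathbf y^*-\nabla\widehat u\big)\cdot\mathbf z\,d\Omega.
\end{eqnarray*}
Rearranging, this equals $2\big[a^*(\mathbf y^*,\mathbf z) + \int_\Omega(\widehat\lambda\widehat u-\widehat u){\rm div}\mathbf z\,d\Omega - \int_\Omega\nabla\widehat u\cdot\mathbf z\,d\Omega\big]$. By the definition of $\mathcal F^*$ we have $\int_\Omega\widehat\lambda\widehat u\,{\rm div}\mathbf z\,d\Omega = -\mathcal F^*(\widehat\lambda,\widehat u,\mathbf z)$, and by the divergence theorem (\ref{Divergence_Equality}) applied to $\widehat u\in V$ (so the boundary term vanishes) we have $\int_\Omega\widehat u\,{\rm div}\mathbf z\,d\Omega = -\int_\Omega\nabla\widehat u\cdot\mathbf z\,d\Omega$; hence the terms $-\int_\Omega\widehat u\,{\rm div}\mathbf z\,d\Omega$ and $-\int_\Omega\nabla\widehat u\cdot\mathbf z\,d\Omega$ cancel. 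What remains inside the bracket is $a^*(\mathbf y^*,\mathbf z) - \mathcal F^*(\widehat\lambda,\widehat u,\mathbf z)$, which is zero precisely because $\mathbf y^*$ solves (\ref{Dual_Problem}) and $\mathbf z\in\mathbf W$. Therefore the cross terms vanish and (\ref{Optimization_Property}) follows.

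The only subtle point — the step I would be most careful about — is the cancellation of the two $\widehat u$ terms via the divergence theorem: one must check that $\widehat u\in V=H_0^1(\Omega)$ legitimately kills the boundary integral in (\ref{Divergence_Equality}) for every $\mathbf z\in\mathbf W=H({\rm div};\Omega)$, which is the standard density/trace argument underlying that Green's identity. Everything else is routine bilinear bookkeeping; no regularity beyond the stated function-space memberships is needed, and $\widehat\lambda$ enters only as a scalar multiplier so no extra hypotheses on it are used.
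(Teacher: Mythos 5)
Your proof is correct: the decomposition $\mathbf y=\mathbf y^*+\mathbf z$, the identification of the quadratic remainder with $\||\mathbf z\||_*^2$, and the cancellation of the cross term via the divergence identity (\ref{Divergence_Equality}) (with the boundary term killed by $\widehat u\in H_0^1(\Omega)$) together with the variational equation (\ref{Dual_Problem}) is exactly the standard orthogonality argument behind this Pythagoras-type identity. The paper itself states the lemma without proof, deferring to the cited complementarity literature, and your argument is precisely the one given there, so there is nothing substantive to compare; the one point you flag as subtle (the trace/density justification of the Green identity for $v\in H_0^1$ and $\mathbf z\in H(\mathrm{div};\Omega)$) is indeed the only non-bookkeeping ingredient and is already assumed by the paper in (\ref{Divergence_Equality}).
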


In order to give a computable error estimate, the reasonable choice is a certain approximate
solution $\mathbf y_h\in \mathbf W$ of the dual problem (\ref{Dual_Problem}). Then we can give a
guaranteed upper bound of the error estimate for the first eigenfunction approximation.
\begin{corollary}
Under the conditions of Theorem \ref{Theorem_Upper_Bound},
 there exists an exact eigenfunction $u_1\in M(\lambda_1)$
such that the error estimate for the eigenpair approximation
$(\widehat \lambda_1,\widehat u_1)$ has the following upper bound
\begin{eqnarray}\label{Upper_Bound_Computable}
\|u_1-\widehat u_1\|_a&\leq& \frac{\lambda_{2}^L}{\lambda_{2}^L-\widehat \lambda_1}
\eta(\widehat \lambda_1,\widehat u_1,\mathbf y_h),
\end{eqnarray}
where $\mathbf y_h\in \mathbf W$ is a reasonable approximate solution
of the dual problem (\ref{Dual_Problem})
with  $\widehat \lambda=\widehat \lambda_1$ and $\widehat u=\widehat u_1$.
\end{corollary}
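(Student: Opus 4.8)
The plan is to derive the corollary as a direct consequence of Theorem~\ref{Theorem_Upper_Bound}, without re-running the duality argument. Theorem~\ref{Theorem_Upper_Bound} asserts that for \emph{every} $\mathbf y\in\mathbf W$ we have
\begin{eqnarray*}
\|u_1-\widehat u_1\|_a\leq \frac{\lambda_2^L}{\lambda_2^L-\widehat\lambda_1}\,\eta(\widehat\lambda_1,\widehat u_1,\mathbf y),
\end{eqnarray*}
and since $\mathbf y_h\in\mathbf W$ is one admissible choice of the free vector field, substituting $\mathbf y=\mathbf y_h$ immediately yields the claimed bound (\ref{Upper_Bound_Computable}). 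So the first and essentially only step is to invoke the theorem with this particular $\mathbf y_h$.

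The remaining point to address is \emph{why} the right-hand side is now computable and why it makes sense to speak of ``a reasonable approximate solution'' rather than the exact minimizer $\mathbf y^*$. Here I would note that the coefficient $\lambda_2^L/(\lambda_2^L-\widehat\lambda_1)$ is a known positive number under the standing hypothesis $\lambda_1\leq\widehat\lambda_1<\lambda_2^L\leq\lambda_2$, and that $\eta(\widehat\lambda_1,\widehat u_1,\mathbf y_h)$ is an explicitly evaluable quantity once $\widehat u_1$, $\widehat\lambda_1$ and $\mathbf y_h$ are at hand: it involves only the $L^2$-norms of $\widehat\lambda_1\widehat u_1-\widehat u_1+\mathrm{div}\,\mathbf y_h$ and of $\mathbf y_h-\nabla\widehat u_1$, which are integrals of known functions. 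I would also remark, using Lemma~\ref{Optimization_Property_Lemma}, that the bound degrades gracefully: $\eta^2(\widehat\lambda_1,\widehat u_1,\mathbf y_h)=\eta^2(\widehat\lambda_1,\widehat u_1,\mathbf y^*)+\||\mathbf y^*-\mathbf y_h\||_*^2$, so the overestimation incurred by using $\mathbf y_h$ instead of $\mathbf y^*$ is exactly $\||\mathbf y^*-\mathbf y_h\||_*^2$ and is small when $\mathbf y_h$ is a good approximation of $\mathbf y^*$; in practice $\mathbf y_h$ is obtained by solving the discretized dual problem (\ref{Dual_Problem}) in a Raviart--Thomas type subspace of $\mathbf W$.

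There is no real obstacle in this proof: the content is entirely carried by Theorem~\ref{Theorem_Upper_Bound}, whose conclusion quantifies over all $\mathbf y\in\mathbf W$, and the corollary is just the observation that any concrete, computable choice of $\mathbf y$ — in particular an approximate dual solution $\mathbf y_h$ — is allowed. The only thing one must be careful about is that $\mathbf y_h$ genuinely lies in $\mathbf W=H(\mathrm{div};\Omega)$ (so that $\mathrm{div}\,\mathbf y_h\in L^2(\Omega)$ and the divergence theorem (\ref{Divergence_Equality}) used inside the theorem's proof applies); this is automatic for Raviart--Thomas finite element functions, which have continuous normal components across interelement faces. Hence the proof is one line: apply Theorem~\ref{Theorem_Upper_Bound} with $\mathbf y=\mathbf y_h\in\mathbf W$.
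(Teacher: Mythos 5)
Your proof is correct and matches the paper's (implicit) argument exactly: the corollary is simply Theorem \ref{Theorem_Upper_Bound} specialized to the admissible choice $\mathbf y=\mathbf y_h\in\mathbf W$, which is why the paper states it without a separate proof. Your additional remarks on computability and on the exact overestimation $\||\mathbf y^*-\mathbf y_h\||_*^2$ via Lemma \ref{Optimization_Property_Lemma} are accurate and consistent with the paper's surrounding discussion.
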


We would like to point out that the quantity $\eta(\lambda_{i,h},u_{i,h},\mathbf y^*)$, where $\mathbf y^*\in \mathbf W$
is the solution of (\ref{Dual_Problem}) with $\widehat\lambda =\lambda_{i,h}$ and $\widehat u=u_{i,h}$, is
an asymptotically  exact error estimate for the
eigenfunction approximation $u_{i,h}$ when the eigenpair approximation is obtained by solving
the discrete eigenvalue problem (\ref{Weak_Eigenvalue_Discrete}). Now, let us
discuss the efficiency of the a posteriori error estimate $\eta(\lambda_{i,h},u_{i,h},\mathbf y^*)$ and
$\eta(\lambda_{i,h},u_{i,h},\mathbf y_h)$.
\begin{theorem}\label{Efficiency_Theorem}
Assume $(\lambda_{i,h},u_{i,h})$ be an eigenpair approximation of the discrete eigenvalue
problem (\ref{Weak_Eigenvalue_Discrete})
corresponding to the eigenvalue $\lambda_i$. Then there exists an exact eigenfunction $u_i\in M(\lambda_i)$
such that $\eta(\lambda_{i,h},u_{i,h},\mathbf y^*)$ has the following inequalities
\begin{eqnarray}\label{Efficiency}
\theta_{1,i}\|u_i-u_{i,h}\|_a \leq
\eta(\lambda_{i,h},u_{i,h},\mathbf y^*)\leq \theta_{2,i}\|u_i-u_{i,h}\|_a,
\end{eqnarray}
where $\mathbf y^*\in \mathbf W$ is the solution  of the dual problem (\ref{Dual_Problem})
with  $\widehat \lambda= \lambda_{i,h}$ and $\widehat u=u_{i,h}$ and
\begin{eqnarray}\label{Theta_1_and_2}
\theta_{1,i}:=(1-C_i^2\lambda_{i,h}\eta_a^2(h))\ \ \ {\rm and}\ \ \
\theta_{2,i}:=\sqrt{1+\big(2(\lambda_i-1)^2+1\big)C_i^2\eta_a^2(h)}.
\end{eqnarray}
Further, we have the following asymptotic exactness
\begin{eqnarray}\label{Exactness}
\lim_{h\rightarrow 0}\frac{\eta(\lambda_{i,h},u_{i,h},\mathbf y^*)}{\|u_i-u_{i,h}\|_a}=1.
\end{eqnarray}
\end{theorem}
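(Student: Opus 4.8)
The plan is to estimate $\eta(\lambda_{i,h}, u_{i,h}, \mathbf y^*)$ from both sides by comparing it against the energy-norm error $\|u_i - u_{i,h}\|_a$, where $u_i \in M(\lambda_i)$ is the exact eigenfunction provided by Lemma~\ref{Err_Eigen_Global_Lem}. The natural starting point is to exhibit an explicit, near-optimal candidate flux $\mathbf y = \nabla u_i$ and use the optimality of $\mathbf y^*$ (Lemma~\ref{Optimization_Property_Lemma}, which gives in particular $\eta(\widehat\lambda, \widehat u, \mathbf y^*) \le \eta(\widehat\lambda, \widehat u, \mathbf y)$ for all $\mathbf y \in \mathbf W$) to obtain the upper bound. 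For this choice one computes
\begin{eqnarray*}
\eta^2(\lambda_{i,h}, u_{i,h}, \nabla u_i)
&=& \|\lambda_{i,h} u_{i,h} - u_{i,h} + \Delta u_i\|_0^2 + \|\nabla u_i - \nabla u_{i,h}\|_0^2,
\end{eqnarray*}
and since $u_i$ solves $-\Delta u_i + u_i = \lambda_i u_i$ (so $\Delta u_i = (1-\lambda_i)u_i$), the first term becomes $\|\lambda_{i,h} u_{i,h} - u_{i,h} + (1-\lambda_i)u_i\|_0$. I would split this as $\|(1-\lambda_i)(u_i - u_{i,h})\|_0 + \|(\lambda_{i,h} - \lambda_i)u_{i,h}\|_0 \le |1-\lambda_i|\,\|u_i - u_{i,h}\|_b + |\lambda_i - \lambda_{i,h}|$ (using $\|u_{i,h}\|_b = 1$). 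Now invoke (\ref{Err_Eigenfunction_Global_0_Norm}) to bound $\|u_i - u_{i,h}\|_b \le C_i \eta_a(h)\|u_i - u_{i,h}\|_a$ and (\ref{Estimate_Eigenvalue}) to bound $|\lambda_i - \lambda_{i,h}| \le C_i \eta_a(h)\|u_i - u_{i,h}\|_a$; squaring, collecting, and absorbing constants (with a factor-of-two from $(a+b)^2 \le 2a^2 + 2b^2$ on the cross term) yields $\eta^2(\lambda_{i,h},u_{i,h},\nabla u_i) \le \bigl(1 + (2(\lambda_i-1)^2 + 1)C_i^2\eta_a^2(h)\bigr)\|u_i - u_{i,h}\|_a^2$, which after taking square roots and using $\eta(\cdot,\cdot,\mathbf y^*) \le \eta(\cdot,\cdot,\nabla u_i)$ is exactly the upper bound with the stated $\theta_{2,i}$.

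For the lower bound I would return to the core computation in the proof of Theorem~\ref{Theorem_Upper_Bound}, specialized to $\widehat\lambda = \lambda_{i,h}$, $\widehat u = u_{i,h}$, $w = u_i - u_{i,h}$, and $\mathbf y = \mathbf y^*$: the chain of inequalities there gives
\begin{eqnarray*}
a(u_i - u_{i,h}, w) - \lambda_{i,h} b(u_i - u_{i,h}, w) \le \eta(\lambda_{i,h}, u_{i,h}, \mathbf y^*)\,\|w\|_a.
\end{eqnarray*}
On the left, $a(w,w) - \lambda_{i,h} b(w,w) = \|w\|_a^2 - \lambda_{i,h}\|w\|_b^2 \ge \|w\|_a^2 - \lambda_{i,h}\,C_i^2\eta_a^2(h)\|w\|_a^2 = (1 - C_i^2\lambda_{i,h}\eta_a^2(h))\|w\|_a^2$, again by (\ref{Err_Eigenfunction_Global_0_Norm}). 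Dividing through by $\|w\|_a$ (when $\|w\|_a \ne 0$; the degenerate case is trivial) gives $\theta_{1,i}\|u_i - u_{i,h}\|_a \le \eta(\lambda_{i,h}, u_{i,h}, \mathbf y^*)$ with the stated $\theta_{1,i}$. Finally, the asymptotic exactness (\ref{Exactness}) follows because $\eta_a(h) \to 0$ as $h \to 0$ (this is standard approximation theory for the source operator $T$ and is implicit in the framework of Lemma~\ref{Err_Eigen_Global_Lem}), hence $\theta_{1,i} \to 1$ and $\theta_{2,i} \to 1$, and the two-sided bound (\ref{Efficiency}) squeezes the ratio to $1$.

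The main obstacle I anticipate is the upper bound: one must resist the temptation to directly estimate $\eta(\lambda_{i,h}, u_{i,h}, \mathbf y^*)$ and instead commit to the test flux $\mathbf y = \nabla u_i$, which requires knowing $\nabla u_i \in \mathbf W = H(\mathrm{div};\Omega)$ — this holds precisely because $\mathrm{div}\,\nabla u_i = \Delta u_i = (1-\lambda_i)u_i \in L^2(\Omega)$, so the eigenfunction regularity needed is only $u_i \in H^1_0(\Omega)$ with $\Delta u_i \in L^2$, which is automatic for eigenfunctions. A secondary bookkeeping point is getting the constant $2(\lambda_i - 1)^2 + 1$ exactly right, which forces a careful choice of how to group terms when squaring $|1-\lambda_i|\,\|u_i-u_{i,h}\|_b + |\lambda_i - \lambda_{i,h}|$ — one wants the clean combination $\le \sqrt{2}\,|1-\lambda_i|\cdot C_i\eta_a(h)\|w\|_a$ for one piece and the $\|\nabla w\|_0^2 + (\text{the }+1\text{ from }|\lambda_i-\lambda_{i,h}|^2)$ piece bounded by $(1 + C_i^2\eta_a^2(h))\|w\|_a^2$, so that the three contributions assemble into the advertised $\theta_{2,i}^2$. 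Everything else is a routine application of Lemma~\ref{Err_Eigen_Global_Lem} and the Riesz/optimality structure already established.
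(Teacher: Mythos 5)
Your proposal is correct and follows essentially the same route as the paper: the lower bound is the Theorem \ref{Theorem_Upper_Bound} computation specialized to $(\lambda_{i,h},u_{i,h})$ combined with (\ref{Err_Eigenfunction_Global_0_Norm}), the upper bound tests the optimality of $\mathbf y^*$ against the flux $\nabla u_i\in\mathbf W$, and exactness follows from $\eta_a(h)\to 0$. One bookkeeping caution: the advertised constant $2(\lambda_i-1)^2+1$ (rather than $+2$) is obtained by writing $\|\nabla(u_i-u_{i,h})\|_0^2=\|u_i-u_{i,h}\|_a^2-\|u_i-u_{i,h}\|_b^2$ and letting the term $-\|u_i-u_{i,h}\|_b^2$ absorb one of the two copies of $C_i^2\eta_a^2(h)\|u_i-u_{i,h}\|_a^2$ produced by $2|\lambda_i-\lambda_{i,h}|^2$, as in the paper's display (\ref{Inequality_3}); the sub-grouping you sketch, $\|\nabla w\|_0^2+2|\lambda_i-\lambda_{i,h}|^2\le(1+C_i^2\eta_a^2(h))\|w\|_a^2$, does not follow from $\|\nabla w\|_0^2\le\|w\|_a^2$ alone and needs this identity, though the assembled total is the same.
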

\begin{proof}
Similarly, we can also choose $u_i\in M(\lambda_i)$ such that $b(v,u_i-u_{i,h})=0$ for any $v\in M(\lambda_i)$.
Then from the similar process in  (\ref{Inequality_1}), we have
\begin{eqnarray}
&&\|u_i-u_{i,h}\|_a^2\leq \eta(\lambda_{i,h},u_{i,h},\mathbf y)\|u_i-u_{i,h}\|_a
+\lambda_{i,h}\|u_i-u_{i,h}\|_b^2\nonumber\\
&&\leq \eta(\lambda_{i,h},u_{i,h},\mathbf y)\|u_i-u_{i,h}\|_a
+C_i^2\lambda_{i,h}\eta_a^2(h)\|u_i-u_{i,h}\|_a^2,\ \ \ \forall \mathbf y\in\mathbf W.
\end{eqnarray}
It leads to
\begin{eqnarray}\label{Inequality_6}
\|u_i-u_{i,h}\|_a &\leq& \frac{1}{1-C_i^2\lambda_{i,h}\eta_a^2(h)}\eta(\lambda_{i,h},u_{i,h},\mathbf y),
\ \ \ \forall \mathbf y\in\mathbf W.
\end{eqnarray}

From the definition (\ref{Definition_Eta}), the eigenvalue problem (\ref{LaplaceEigenProblem})
and $\nabla u_i\in\mathbf W$, we have
\begin{eqnarray}\label{Inequality_4}
\eta^2(\lambda_{i,h},u_{i,h},\nabla u_i)=\|\nabla u_{i,h}-\nabla u_i\|_b^2
+\|(\lambda_{i,h}-1)u_{i,h}-(\lambda_i-1)u_i\|_b^2.
\end{eqnarray}
Then combining (\ref{Optimization_Property}), (\ref{Inequality_4}) and Lemma \ref{Err_Eigen_Global_Lem},
 the following estimates hold
\begin{eqnarray}\label{Inequality_3}
&&\eta^2(\lambda_{i,h},u_{i,h},\mathbf y^*)\leq \eta^2(\lambda_{i,h},u_{i,h},\nabla u_i)\nonumber\\
&=&\|\nabla u_{i,h}-\nabla u_i\|_b^2+\|(\lambda_{i,h}-1)u_{i,h}-(\lambda_i-1)u_i\|_b^2\nonumber\\
&=&\|u_i-u_{i,h}\|_a^2+\|(\lambda_{i,h}-1)u_{i,h}-(\lambda_i-1)u_i\|_b^2-\|u_i-u_{i,h}\|_b^2\nonumber\\
&=& \|u_i-u_{i,h}\|_a^2+\|(\lambda_{i,h}-\lambda_i)u_{i,h}+(\lambda_i-1)(u_{i,h}-u_i)\|_b^2
-\|u_i-u_{i,h}\|_b^2\nonumber\\
&\leq& \|u_i-u_{i,h}\|_a^2 + 2|\lambda_{i,h}-\lambda_i|^2 +\big(2(\lambda_i-1)^2-1\big)\|u_i-u_{i,h}\|_b^2\nonumber\\
&\leq&\big(1+2C_i^2\eta_a^2(h)+\big(2(\lambda_i-1)^2-1\big)C_i^2\eta_a^2(h)\big)\|u_i-u_{i,h}\|_a^2\nonumber\\
&\leq&\big(1+\big(2(\lambda_i-1)^2+1\big)C_i^2\eta_a^2(h)\big)\|u_i-u_{i,h}\|_a^2,
\end{eqnarray}
where we used the estimate
\begin{eqnarray*}
\lambda_{i,h}-\lambda_i &\leq& C_i\eta_a(h)\|u_i-u_{i,h}\|_a.
\end{eqnarray*}
The inequality (\ref{Inequality_3}) leads to the following estimate
\begin{eqnarray}\label{Inequality_5}
\eta(\lambda_{i,h},u_{i,h},\mathbf y^*)&\leq&\sqrt{1+\big(2(\lambda_i-1)^2+1\big)C_i^2\eta_a^2(h)}\|u_i-u_{i,h}\|_a.
\end{eqnarray}
From inequalities (\ref{Optimization_Property}), (\ref{Inequality_6}) and (\ref{Inequality_5}), we
obtain the desired result  (\ref{Efficiency}) and (\ref{Exactness}) can be deduced easily from the
fact that $\eta_a(h)\rightarrow 0$ as $h\rightarrow 0$.
\end{proof}

\begin{corollary}\label{Efficiency_h_Eigenfun_Corollary}
Assume the conditions of Theorem \ref{Efficiency_Theorem} hold and there exists a constant $\gamma_i>0$ such that
the approximation $\mathbf y_h$ of $\mathbf y^*$ satisfies
$\||\mathbf y^*-\mathbf y_h\||_*\leq \gamma_i\|u_i-u_{i,h}\|_a$. Then the following efficiency holds
\begin{eqnarray}\label{Efficiency_2}
\eta(\lambda_{i,h},u_{i,h},\mathbf y_h)&\leq&\sqrt{\theta_{2,i}^2+\gamma_i^2}\|u_i-u_{i,h}\|_a.
\end{eqnarray}
Further, the estimator $\eta(\lambda_{i,h},u_{i,h},\mathbf y_h)$ is asymptotically exact if and only if
the following condition holds
\begin{eqnarray}\label{Condition_Exact}
\lim_{h\rightarrow0}\frac{\||\mathbf y^*-\mathbf y_h\||_*}{\|u_i-u_{i,h}\|_a}=0.
\end{eqnarray}
\end{corollary}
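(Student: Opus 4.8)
The plan is to reduce the whole statement to the Pythagoras-type identity of Lemma \ref{Optimization_Property_Lemma} combined with the efficiency bound already obtained inside the proof of Theorem \ref{Efficiency_Theorem}. First I would apply Lemma \ref{Optimization_Property_Lemma} with $\widehat\lambda=\lambda_{i,h}$, $\widehat u=u_{i,h}$ and $\mathbf y=\mathbf y_h$ to get the exact decomposition
\begin{eqnarray*}
\eta^2(\lambda_{i,h},u_{i,h},\mathbf y_h)=\eta^2(\lambda_{i,h},u_{i,h},\mathbf y^*)+\||\mathbf y^*-\mathbf y_h\||_*^2 .
\end{eqnarray*}
This orthogonal splitting is the only structural ingredient; everything else is bookkeeping with the quantities $\theta_{2,i}$ and $\gamma_i$.

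For the efficiency estimate (\ref{Efficiency_2}), I would bound the two terms on the right-hand side separately. The first term is controlled by the upper inequality in (\ref{Efficiency}), equivalently by (\ref{Inequality_5}), which gives $\eta(\lambda_{i,h},u_{i,h},\mathbf y^*)\le\theta_{2,i}\|u_i-u_{i,h}\|_a$; the second term is controlled by the standing hypothesis $\||\mathbf y^*-\mathbf y_h\||_*\le\gamma_i\|u_i-u_{i,h}\|_a$. Substituting these into the identity and factoring out $\|u_i-u_{i,h}\|_a^2$ yields $\eta^2(\lambda_{i,h},u_{i,h},\mathbf y_h)\le(\theta_{2,i}^2+\gamma_i^2)\|u_i-u_{i,h}\|_a^2$, and taking square roots gives (\ref{Efficiency_2}).

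For the asymptotic exactness claim, I would divide the identity above by $\|u_i-u_{i,h}\|_a^2$, obtaining
\begin{eqnarray*}
\left(\frac{\eta(\lambda_{i,h},u_{i,h},\mathbf y_h)}{\|u_i-u_{i,h}\|_a}\right)^2
=\left(\frac{\eta(\lambda_{i,h},u_{i,h},\mathbf y^*)}{\|u_i-u_{i,h}\|_a}\right)^2
+\left(\frac{\||\mathbf y^*-\mathbf y_h\||_*}{\|u_i-u_{i,h}\|_a}\right)^2 .
\end{eqnarray*}
By the asymptotic exactness (\ref{Exactness}) already established in Theorem \ref{Efficiency_Theorem}, the first term on the right tends to $1$ as $h\to0$. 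Hence the left-hand side tends to $1$ if and only if the last term tends to $0$, which is exactly condition (\ref{Condition_Exact}). Since the square-root function is continuous at $1$ (and, if one wants a quantitative statement, the ratio is bounded below by $\theta_{1,i}\to1$ for small $h$ thanks to (\ref{Inequality_6}) applied with $\mathbf y=\mathbf y_h$), convergence of the squared ratios is equivalent to convergence of the ratios themselves, which gives the stated equivalence.

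As for obstacles, there is essentially none of substance: the corollary is a direct assembly of Lemma \ref{Optimization_Property_Lemma}, inequality (\ref{Inequality_5}), and the hypothesis on $\mathbf y_h$. The only place warranting a line of care is the ``only if'' direction of the equivalence, where one must record that passing between the limit of the squares and the limit of the ratios is legitimate; I would handle this via the continuity of $\sqrt{\cdot}$ together with the lower bound coming from (\ref{Inequality_6}).
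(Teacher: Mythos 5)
Your proposal is correct and follows essentially the same route as the paper: the Pythagoras identity of Lemma \ref{Optimization_Property_Lemma} combined with the upper bound $\eta(\lambda_{i,h},u_{i,h},\mathbf y^*)\le\theta_{2,i}\|u_i-u_{i,h}\|_a$ and the hypothesis on $\mathbf y_h$ gives (\ref{Efficiency_2}), exactly as in the paper's proof. The paper dismisses the asymptotic-exactness equivalence as immediate, whereas you spell out the division by $\|u_i-u_{i,h}\|_a^2$ and the use of (\ref{Exactness}); that is a faithful filling-in of the same argument, not a different one.
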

\begin{proof}
First from (\ref{Optimization_Property}) and (\ref{Efficiency}), we have
\begin{eqnarray}
\eta^2(\lambda_{i,h},u_{i,h},\mathbf y_h)&=&\eta^2(\lambda_{i,h},u_{i,h},\mathbf y^*)
+\||\mathbf y^*-\mathbf y_h\||_*^2\nonumber\\
&\leq&\theta_{2,i}^2\|u_i-u_{i,h}\|_a^2+\gamma_i^2\|u_i-u_{i,h}\|_a^2\nonumber\\
&\leq&(\theta_{2,i}^2+\gamma_i^2)\|u_i-u_{i,h}\|_a^2.
\end{eqnarray}
Then the desired result (\ref{Efficiency_2}) can be obtained and the asymptotically exactness of the estimator
follows immediately from the condition (\ref{Condition_Exact}).
\end{proof}

\section{Lower bound of the eigenvalue}\label{Section_Lower_Bound}
In this section, based on the guaranteed upper bound for the error estimate of the first eigenfunction approximation,
we give a guaranteed lower bound of the first eigenvalue.
Further, we also give asymptotically lower bounds of the general eigenvalues based on the asymptotically exact
 error estimates for the general eigenfunction approximations which are obtained by solving the
 discrete finite element eigenvalue problem (\ref{Weak_Eigenvalue_Discrete}).
 Actually, the process is very direct since we have
 the following Rayleigh quotient expansion which comes from \cite{BabuskaOsborn_1989,BabuskaOsborn_Book}.
\begin{lemma}(\cite{BabuskaOsborn_1989,BabuskaOsborn_Book})\label{Rayleigh_Quotient_error_theorem}
Assume $(\lambda,u)$ is an exact solution of the eigenvalue problem
(\ref{weak_eigenvalue_problem}) and  $0\neq \psi\in V$. Let us define
\begin{eqnarray}\label{rayleighw}
\bar{\lambda}=\frac{a(\psi,\psi)}{b(\psi,\psi)}.
\end{eqnarray}
Then we have
\begin{eqnarray}\label{rayexpan}
\bar{\lambda}-\lambda
&=&\frac{a(u-\psi,u-\psi)}{b(\psi,\psi)}-\lambda
\frac{b(u-\psi,u-\psi)}{b(\psi,\psi)}.
\end{eqnarray}
\end{lemma}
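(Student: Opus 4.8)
The statement to prove is the Rayleigh quotient identity \eqref{rayexpan}. The plan is to work directly from the definitions of the bilinear forms and the weak eigenvalue problem \eqref{weak_eigenvalue_problem}, expanding the right-hand side and collecting terms.

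First I would expand $a(u-\psi,u-\psi) = a(u,u) - 2a(u,\psi) + a(\psi,\psi)$ and similarly $b(u-\psi,u-\psi) = b(u,u) - 2b(u,\psi) + b(\psi,\psi)$, both using the symmetry of $a(\cdot,\cdot)$ and $b(\cdot,\cdot)$. Substituting these into the right-hand side of \eqref{rayexpan} and multiplying through by $b(\psi,\psi)$, the claim becomes the algebraic identity
\begin{eqnarray*}
\big(\bar\lambda-\lambda\big)b(\psi,\psi) &=& a(u,u) - 2a(u,\psi) + a(\psi,\psi) - \lambda\big(b(u,u) - 2b(u,\psi) + b(\psi,\psi)\big).
\end{eqnarray*}
Next I would use the definition \eqref{rayleighw}, namely $\bar\lambda\, b(\psi,\psi) = a(\psi,\psi)$, so the left-hand side equals $a(\psi,\psi) - \lambda b(\psi,\psi)$; the $a(\psi,\psi)$ and $-\lambda b(\psi,\psi)$ terms then cancel against their counterparts on the right, reducing the target to
\begin{eqnarray*}
0 &=& a(u,u) - 2a(u,\psi) - \lambda b(u,u) + 2\lambda b(u,\psi).
\end{eqnarray*}
Finally I would invoke the fact that $(\lambda,u)$ solves \eqref{weak_eigenvalue_problem}, i.e. $a(u,v) = \lambda b(u,v)$ for all $v\in V$; taking $v=u$ gives $a(u,u) = \lambda b(u,u)$ and taking $v=\psi$ gives $a(u,\psi) = \lambda b(u,\psi)$. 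These two substitutions make every term on the right cancel in pairs, completing the proof.

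There is essentially no obstacle here: the result is a pure identity whose proof is a short computation, and the only things used are bilinearity, symmetry, the definition of $\bar\lambda$, and the weak formulation with the two test functions $v=u$ and $v=\psi$. The one point worth a moment's care is that $\psi$ need not be normalized, so one must keep the factors of $b(\psi,\psi)$ throughout rather than assuming $b(\psi,\psi)=1$; clearing this denominator at the start, as above, is the cleanest way to handle it.
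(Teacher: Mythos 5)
Your proof is correct: the expansion by bilinearity and symmetry, the cancellation via $\bar\lambda\,b(\psi,\psi)=a(\psi,\psi)$, and the two substitutions $v=u$ and $v=\psi$ in the weak form $a(u,v)=\lambda b(u,v)$ together establish the identity with no gaps. Note that the paper itself gives no proof of this lemma---it is quoted from Babu\v{s}ka--Osborn---but your computation is exactly the standard argument behind that cited result, so there is nothing to reconcile.
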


\begin{theorem}\label{Guaranteed_Lower_Bound_Theorem}
Assume $\lambda_1$ is the first eigenvalue of the eigenvalue problem (\ref{LaplaceEigenProblem})
and $(\widehat \lambda_1,\widehat u_1)\in \mathcal{R}\times V$ ($\|\widehat u_1\|_b=1$)
be the eigenpair approximation for the first eigenvalue and eigenfunction, respectively.
Then we have the following
guaranteed lower bound of the first eigenvalue
\begin{eqnarray}\label{Upper_Bound_Estimate_Lambda}
\widehat\lambda_1-\lambda_1 &\leq& \left(\frac{\lambda_{2}^L}{\lambda_{2}^L-\widehat\lambda_1}\right)\frac{\lambda_2^L}{\lambda_2^L-\alpha^2\eta^2(\widehat\lambda_1,\widehat u_1,\mathbf y_h)}
\eta^2(\widehat\lambda_1,\widehat u_1,\mathbf y_h),
\end{eqnarray}
where $\alpha=\lambda_{2}^L/(\lambda_{2}^L-\widehat\lambda_1)$ and $\mathbf y_h\in \mathbf W$ is a reasonable approximate solution  of the dual problem (\ref{Dual_Problem})
with  $\widehat \lambda=\widehat \lambda_1$ and $\widehat u=\widehat u_1$.

Then the following guaranteed lower-bound result holds
\begin{eqnarray}\label{Lower_Bound_Lambda}
\widehat\lambda_1^L:= \widehat\lambda_1-\left(\frac{\lambda_{2}^L}{\lambda_{2}^L-\widehat\lambda_1}\right)\frac{\lambda_2^L}{\lambda_2^L-\alpha^2\eta^2(\widehat\lambda_1,\widehat u_1,\mathbf y_h)}
\eta^2(\widehat\lambda_1,\widehat u_1,\mathbf y_h)
\leq  \lambda_1,
\end{eqnarray}
where $\widehat\lambda_1^L$ denotes a lower bound of the first eigenvalue $\lambda_1$.
\end{theorem}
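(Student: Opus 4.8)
The plan is to start from the Rayleigh quotient expansion in Lemma~\ref{Rayleigh_Quotient_error_theorem}, applied with $\psi=\widehat u_1$ and $(\lambda,u)=(\lambda_1,u_1)$, where $u_1\in M(\lambda_1)$ is the specific eigenfunction selected in Theorem~\ref{Theorem_Upper_Bound} (the one satisfying $b(v,u_1-\widehat u_1)=0$ for all $v\in M(\lambda_1)$). Since $b(\widehat u_1,\widehat u_1)=1$, formula~(\ref{rayexpan}) gives
\begin{eqnarray*}
\widehat\lambda_1-\lambda_1 = \|u_1-\widehat u_1\|_a^2 - \lambda_1\|u_1-\widehat u_1\|_b^2 \leq \|u_1-\widehat u_1\|_a^2.
\end{eqnarray*}
So a crude bound is already $\widehat\lambda_1-\lambda_1\leq\|u_1-\widehat u_1\|_a^2\leq\alpha^2\eta^2$ by the Corollary to Theorem~\ref{Theorem_Upper_Bound}; but the target inequality~(\ref{Upper_Bound_Estimate_Lambda}) is sharper, carrying an extra factor $\lambda_2^L/(\lambda_2^L-\alpha^2\eta^2)$ that is close to $1$ rather than discarding the $-\lambda_1\|u_1-\widehat u_1\|_b^2$ term entirely. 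Hence the real work is to not throw away that negative term but instead to bound $\|u_1-\widehat u_1\|_b^2$ from below in terms of $\|u_1-\widehat u_1\|_a^2$.

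The key step is the orthogonality inequality~(\ref{Inequality_2}) from the proof of Theorem~\ref{Theorem_Upper_Bound}: because $b(v,u_1-\widehat u_1)=0$ for all $v\in M(\lambda_1)$, we have $\|w\|_a^2/\|w\|_b^2\geq\lambda_2\geq\lambda_2^L$ with $w=u_1-\widehat u_1$, i.e. $\|w\|_b^2\leq\|w\|_a^2/\lambda_2^L$. Substituting into the Rayleigh expansion and using $\lambda_1>0$,
\begin{eqnarray*}
\widehat\lambda_1-\lambda_1 \leq \|w\|_a^2\Big(1-\frac{\lambda_1}{\lambda_2^L}\Big)\leq \|w\|_a^2.
\end{eqnarray*}
This still does not produce the stated factor. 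To get~(\ref{Upper_Bound_Estimate_Lambda}) I expect one must instead re-run the argument of Theorem~\ref{Theorem_Upper_Bound} but keep track of $\|w\|_b$ rather than bounding it by $\|w\|_a/\sqrt{\lambda_2^L}$ at the last moment. Specifically, from~(\ref{Inequality_1}) with the Cauchy–Schwarz step done as $\|\widehat\lambda_1\widehat u_1-\widehat u_1+{\rm div}\mathbf y_h\|_0\|w\|_0+\|\mathbf y_h-\nabla\widehat u_1\|_0\|\nabla w\|_0\leq\eta\,\|w\|_a$, together with $a(w,w)-\widehat\lambda_1 b(w,w)=\|w\|_a^2-\widehat\lambda_1\|w\|_b^2$, one gets $\|w\|_a^2-\widehat\lambda_1\|w\|_b^2\leq\eta\|w\|_a$. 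Combined with $\|w\|_b^2\leq\|w\|_a^2/\lambda_2^L$ this yields $\|w\|_a(1-\widehat\lambda_1/\lambda_2^L)\leq\eta$, hence $\|w\|_a\leq\alpha\eta$ and then $\|w\|_b\leq\alpha\eta/\sqrt{\lambda_2^L}$, i.e. $\|w\|_b^2\leq\alpha^2\eta^2/\lambda_2^L$. Now feeding \emph{this} bound on $\|w\|_b^2$ back into the Rayleigh expansion, and using $\lambda_1\geq\widehat\lambda_1-(\widehat\lambda_1-\lambda_1)$ is circular, so instead use the even sharper self-improving estimate: from $\|w\|_a^2-\widehat\lambda_1\|w\|_b^2\leq\eta\|w\|_a$ one can solve for $\|w\|_a$ more carefully, or equivalently observe $\widehat\lambda_1-\lambda_1=\|w\|_a^2-\lambda_1\|w\|_b^2$ and bound $\|w\|_a^2\leq\eta\|w\|_a+\widehat\lambda_1\|w\|_b^2$ so that $\widehat\lambda_1-\lambda_1\leq\eta\|w\|_a+(\widehat\lambda_1-\lambda_1)\|w\|_b^2$, giving $(\widehat\lambda_1-\lambda_1)(1-\|w\|_b^2)\leq\eta\|w\|_a\leq\eta\cdot\alpha\eta$; then inserting $\|w\|_b^2\leq\alpha^2\eta^2/\lambda_2^L$ produces exactly
\begin{eqnarray*}
\widehat\lambda_1-\lambda_1\leq\frac{\alpha\,\eta^2}{1-\alpha^2\eta^2/\lambda_2^L}=\alpha\cdot\frac{\lambda_2^L}{\lambda_2^L-\alpha^2\eta^2}\,\eta^2,
\end{eqnarray*}
which is~(\ref{Upper_Bound_Estimate_Lambda}) since $\alpha=\lambda_2^L/(\lambda_2^L-\widehat\lambda_1)$.

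The second statement~(\ref{Lower_Bound_Lambda}) is then immediate: rearranging~(\ref{Upper_Bound_Estimate_Lambda}) gives $\widehat\lambda_1^L:=\widehat\lambda_1-\alpha\,\frac{\lambda_2^L}{\lambda_2^L-\alpha^2\eta^2}\,\eta^2\leq\lambda_1$. The main obstacle I anticipate is the self-referential bookkeeping in the last display: one needs $\alpha^2\eta^2<\lambda_2^L$ for the denominator to be positive (which should be recorded as a hypothesis or noted to hold once $\eta$ is small enough, since $\eta\to0$ while $\alpha$ stays bounded away from the degenerate case by $\widehat\lambda_1<\lambda_2^L$), and one must be careful that the inequality $(\widehat\lambda_1-\lambda_1)(1-\|w\|_b^2)\leq\eta\|w\|_a$ is used with the correct sign — it is only useful when $\|w\|_b^2<1$, which again follows from $\lambda_2^L\leq\|w\|_a^2/\|w\|_b^2$ once $\|w\|_a^2<\lambda_2^L$, i.e. once $\alpha^2\eta^2<\lambda_2^L$. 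So all the side conditions collapse to the single smallness requirement $\alpha^2\eta^2(\widehat\lambda_1,\widehat u_1,\mathbf y_h)<\lambda_2^L$, which is exactly what makes the denominator in~(\ref{Upper_Bound_Estimate_Lambda}) meaningful.
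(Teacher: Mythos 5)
Your proposal is correct and follows essentially the same route as the paper: the Rayleigh quotient identity $\widehat\lambda_1-\lambda_1=\|w\|_a^2-\lambda_1\|w\|_b^2$ rewritten as $(\widehat\lambda_1-\lambda_1)(1-\|w\|_b^2)=a(w,w)-\widehat\lambda_1 b(w,w)\leq\eta\|w\|_a$ via (\ref{Inequality_1}), then $\|w\|_a\leq\alpha\eta$ from Theorem \ref{Theorem_Upper_Bound} and $\|w\|_b^2\leq\|w\|_a^2/\lambda_2^L$ from (\ref{Inequality_2}). Your closing remark that the whole argument tacitly requires $\alpha^2\eta^2(\widehat\lambda_1,\widehat u_1,\mathbf y_h)<\lambda_2^L$ (so that $1-\|w\|_b^2>0$ and the final denominator is positive) is a fair observation about a hypothesis the paper leaves implicit.
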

\begin{proof}
Similarly, we can also choose $u_1\in M(\lambda_1)$ such that $b(v,u_1-\widehat u_1)=0$ for any $v\in M(\lambda_1)$.
We also set $w=u_1-\widehat u_1$ and from Lemma \ref{Rayleigh_Quotient_error_theorem}, (\ref{Upper_Bound}), (\ref{Inequality_1})
and $\|\widehat u_1\|_b=1$, we have
\begin{eqnarray}\label{Inequality_9}
\widehat\lambda_1-\lambda_1 -(\widehat\lambda_1-\lambda_1)\|w\|_b^2 &=&
a(u_1-\widehat u_1,u_1-\widehat u_1)-\widehat\lambda_1 b(u_1-\widehat u_1,u_1-\widehat u_1)\nonumber\\
&\leq&\eta(\widehat\lambda_1,\widehat u_1,\mathbf y_h)\|u_1-\widehat u_1\|_a.
\end{eqnarray}
Combining (\ref{Upper_Bound}), (\ref{Inequality_2}) and (\ref{Inequality_9}) leads to the following inequalities
\begin{eqnarray}
\widehat\lambda_1-\lambda_1 &\leq& \frac{\|w\|_a}{1-\|w\|_b^2}\eta(\widehat\lambda_1,\widehat u_1,\mathbf y_h)\nonumber\\
&\leq& \frac{\|w\|_a}{1-\frac{1}{\lambda_2^L}\|w\|_a^2}\eta(\widehat\lambda_1,\widehat u_1,\mathbf y_h)\nonumber\\
&\leq& \alpha\frac{\lambda_2^L}{\lambda_2^L-\alpha^2\eta^2(\widehat\lambda_1,\widehat u_1,\mathbf y_h)}
\eta^2(\widehat\lambda_1,\widehat u_1,\mathbf y_h).
\end{eqnarray}
This is the desired result (\ref{Upper_Bound_Estimate_Lambda}). The lower bound result (\ref{Lower_Bound_Lambda}) holds directly and
the proof is complete.
\end{proof}
\begin{remark}
From above derivation (Theorems \ref{Theorem_Upper_Bound} and \ref{Guaranteed_Lower_Bound_Theorem}),
it is easy to know the current method here can also obtain the guaranteed lower bounds for the first $m$
eigenvalues if provided the separation condition $\lambda_m< \lambda_{m+1}^L\leq \lambda_{m+1}$
and $\lambda_{m+1}^L$ is known.
\end{remark}
\begin{theorem}\label{Efficiency_Eigenvalue_Theorem}
Assume the conditions of Theorem \ref{Efficiency_Theorem} hold.
Then the following inequalities hold
\begin{eqnarray}\label{Efficiency_Eigenvalue}
\frac{1-\lambda_i C_i^2\eta_a^2(h)}{\theta_{2,i}^2}\eta^2(\lambda_{i,h},u_{i,h},\mathbf y^*)
\leq \lambda_{i,h}-\lambda_i\leq  \frac{1}{\theta_{1,i}^2}\eta^2(\lambda_{i,h},u_{i,h},\mathbf y^*),
\end{eqnarray}
where $\mathbf y^*\in \mathbf W$ is the solution  of the dual problem (\ref{Dual_Problem})
with  $\widehat \lambda= \lambda_{i,h}$ and $\widehat u=u_{i,h}$.

Further, we have the following asymptotic exactness
\begin{eqnarray}\label{Exactness_Eigenvalue}
\lim_{h\rightarrow 0}\frac{\lambda_{i,h}-\lambda_i}{\eta^2(\lambda_{i,h},u_{i,h},\mathbf y^*)}=1.
\end{eqnarray}
\end{theorem}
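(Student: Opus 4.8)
The plan is to read off the identity for $\lambda_{i,h}-\lambda_i$ from the Rayleigh quotient expansion of Lemma \ref{Rayleigh_Quotient_error_theorem} and then squeeze it between the two efficiency estimates of Theorem \ref{Efficiency_Theorem}. First I would take the same exact eigenfunction $u_i\in M(\lambda_i)$ that is selected in the proof of Theorem \ref{Efficiency_Theorem}, namely the one satisfying $b(v,u_i-u_{i,h})=0$ for all $v\in M(\lambda_i)$, and apply Lemma \ref{Rayleigh_Quotient_error_theorem} with $\psi=u_{i,h}$. Since $b(u_{i,h},u_{i,h})=1$ and $\lambda_{i,h}=a(u_{i,h},u_{i,h})$, the expansion (\ref{rayexpan}) becomes
\[
\lambda_{i,h}-\lambda_i=\|u_i-u_{i,h}\|_a^2-\lambda_i\|u_i-u_{i,h}\|_b^2 .
\]

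For the upper bound I would simply discard the nonnegative term $\lambda_i\|u_i-u_{i,h}\|_b^2$, giving $\lambda_{i,h}-\lambda_i\le\|u_i-u_{i,h}\|_a^2$, and then insert the left inequality of (\ref{Efficiency}), i.e. $\theta_{1,i}\|u_i-u_{i,h}\|_a\le\eta(\lambda_{i,h},u_{i,h},\mathbf y^*)$, to conclude $\lambda_{i,h}-\lambda_i\le\theta_{1,i}^{-2}\eta^2(\lambda_{i,h},u_{i,h},\mathbf y^*)$. For the lower bound I would bound the $b$-term by (\ref{Err_Eigenfunction_Global_0_Norm}), $\|u_i-u_{i,h}\|_b^2\le C_i^2\eta_a^2(h)\|u_i-u_{i,h}\|_a^2$, which yields $\lambda_{i,h}-\lambda_i\ge\bigl(1-\lambda_i C_i^2\eta_a^2(h)\bigr)\|u_i-u_{i,h}\|_a^2$, and then use the right inequality of (\ref{Efficiency}), $\eta(\lambda_{i,h},u_{i,h},\mathbf y^*)\le\theta_{2,i}\|u_i-u_{i,h}\|_a$, to replace $\|u_i-u_{i,h}\|_a^2$ by $\theta_{2,i}^{-2}\eta^2(\lambda_{i,h},u_{i,h},\mathbf y^*)$. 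Combining the two gives exactly (\ref{Efficiency_Eigenvalue}).

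Finally, for the asymptotic exactness (\ref{Exactness_Eigenvalue}) I would invoke $\eta_a(h)\to0$ as $h\to0$, so that $\theta_{1,i}\to1$, $\theta_{2,i}\to1$ and $1-\lambda_i C_i^2\eta_a^2(h)\to1$; hence both the lower and the upper bounds in (\ref{Efficiency_Eigenvalue}), divided by $\eta^2(\lambda_{i,h},u_{i,h},\mathbf y^*)$, tend to $1$, and a sandwich argument finishes the proof.

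I do not expect a serious obstacle here: the argument is a direct corollary of the Rayleigh expansion and Theorem \ref{Efficiency_Theorem}. The only points requiring care are (i) that one must use a single exact eigenfunction $u_i$ throughout — precisely the one fixed by the orthogonality condition $b(v,u_i-u_{i,h})=0$ — so that the Rayleigh identity and the efficiency bounds refer to the same $u_i$; and (ii) that the displayed estimates are meaningful only for $h$ small enough that $\theta_{1,i}>0$ and $1-\lambda_i C_i^2\eta_a^2(h)>0$, which is already implicit in assuming the hypotheses of Theorem \ref{Efficiency_Theorem}.
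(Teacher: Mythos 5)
Your proposal is correct and follows essentially the same route as the paper: the Rayleigh quotient identity $\lambda_{i,h}-\lambda_i=\|u_i-u_{i,h}\|_a^2-\lambda_i\|u_i-u_{i,h}\|_b^2$ from Lemma \ref{Rayleigh_Quotient_error_theorem}, the bound $\|u_i-u_{i,h}\|_b\le C_i\eta_a(h)\|u_i-u_{i,h}\|_a$ for the lower estimate, the two sides of (\ref{Efficiency}) to convert to $\eta^2(\lambda_{i,h},u_{i,h},\mathbf y^*)$, and $\eta_a(h)\to 0$ for asymptotic exactness. Your remarks on fixing a single $u_i$ via the orthogonality condition and on $h$ being small enough are sensible refinements of what the paper leaves implicit.
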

\begin{proof}
From Lemma \ref{Err_Eigen_Global_Lem}, (\ref{Efficiency}) and (\ref{rayexpan}), we have
\begin{eqnarray}\label{Inequality_7}
\lambda_{i,h}-\lambda_i&=&\|u_i-u_{i,h}\|_a^2-\lambda_i\|u_i-u_{i,h}\|_b^2\nonumber\\
&\geq&\|u_i-u_{i,h}\|_a^2-\lambda_i C_i^2\eta_a^2(h)\|u_i-u_{i,h}\|_a^2\nonumber\\
&=&(1-\lambda_i C_i^2\eta_a^2(h))\|u_i-u_{i,h}\|_a^2\nonumber\\
&\geq&\frac{1-\lambda_i C_i^2\eta_a^2(h)}{\theta_{2,i}^2}\eta^2(\lambda_{i,h},u_{i,h},\mathbf y^*).
\end{eqnarray}
From (\ref{Efficiency}) and (\ref{rayexpan}), the following inequalities hold
\begin{eqnarray}\label{Inequality_8}
\lambda_{i,h}-\lambda_i &\leq& \|u_i-u_{i,h}\|_a^2\leq \frac{1}{\theta_{1,i}^2}\eta^2(\lambda_{i,h},u_{i,h},\mathbf y^*).
\end{eqnarray}
The desired result (\ref{Efficiency_Eigenvalue}) can be obtained by
combining (\ref{Inequality_7}) and (\ref{Inequality_8}).
Then we can deduce the asymptotic exactness easily by (\ref{Efficiency_Eigenvalue})
and the property $\eta_a(h)\rightarrow 0$ as $h\rightarrow 0$.
\end{proof}
Based on the result (\ref{Efficiency_Eigenvalue}), we can produce an asymptotically lower bound for the
general eigenvalue $\lambda_i$ by the finite element method.
\begin{corollary}\label{Lower_Bound_Eigen_Corollary}
Under the conditions of Theorem \ref{Efficiency_Eigenvalue_Theorem}, when the mesh size $h$ is small enough,
the following asymptotically lower bound for each eigenvalue $\lambda_i$ holds
\begin{eqnarray}\label{Lower_Bound_Eigen_i}
\lambda_{i,h}^{L}:=\lambda_{i,h}-\kappa \eta^2(\lambda_{i,h},u_{i,h},\mathbf y_h) \leq \lambda_i,
\end{eqnarray}
where $\kappa$ is a number larger than $1$ and
$\mathbf y_h\in \mathbf W$ is a reasonable approximate solution  of the dual problem (\ref{Dual_Problem})
with  $\widehat \lambda= \lambda_{i,h}$ and $\widehat u=u_{i,h}$.
\end{corollary}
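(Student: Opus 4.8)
The plan is to combine the right-hand inequality of (\ref{Efficiency_Eigenvalue}) in Theorem \ref{Efficiency_Eigenvalue_Theorem} with the monotonicity relation of Lemma \ref{Optimization_Property_Lemma}, and then to absorb the prefactor $1/\theta_{1,i}^2$ into the constant $\kappa$ once the mesh is fine enough. First I would recall that, with $\mathbf y^*\in\mathbf W$ the solution of the dual problem (\ref{Dual_Problem}) for $\widehat\lambda=\lambda_{i,h}$ and $\widehat u=u_{i,h}$, Theorem \ref{Efficiency_Eigenvalue_Theorem} gives
\[
\lambda_{i,h}-\lambda_i\ \le\ \frac{1}{\theta_{1,i}^2}\,\eta^2(\lambda_{i,h},u_{i,h},\mathbf y^*).
\]
Next, for the reasonable approximation $\mathbf y_h$ of $\mathbf y^*$, Lemma \ref{Optimization_Property_Lemma} yields
$\eta^2(\lambda_{i,h},u_{i,h},\mathbf y_h)=\eta^2(\lambda_{i,h},u_{i,h},\mathbf y^*)+\||\mathbf y^*-\mathbf y_h\||_*^2\ge\eta^2(\lambda_{i,h},u_{i,h},\mathbf y^*)$,
so that
\[
\lambda_{i,h}-\lambda_i\ \le\ \frac{1}{\theta_{1,i}^2}\,\eta^2(\lambda_{i,h},u_{i,h},\mathbf y_h).
\]
I would emphasize that no extra hypothesis on $\mathbf y_h$ (in particular, none like the one in Corollary \ref{Efficiency_h_Eigenfun_Corollary}) is needed at this point, because only the lower bound $\eta^2(\mathbf y_h)\ge\eta^2(\mathbf y^*)$ is used.

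It then remains to control the factor $1/\theta_{1,i}^2$. Using the definition $\theta_{1,i}=1-C_i^2\lambda_{i,h}\eta_a^2(h)$ together with $\lambda_{i,h}\to\lambda_i$ and $\eta_a(h)\to0$ as $h\to0$ (Lemma \ref{Err_Eigen_Global_Lem}), one has $\theta_{1,i}\to1$, hence $1/\theta_{1,i}^2\to1$. Consequently, for the prescribed number $\kappa>1$ there is an $h_0>0$ such that $0<\theta_{1,i}$ and $1/\theta_{1,i}^2\le\kappa$ whenever $h\le h_0$; for such $h$ the two displays above combine to give $\lambda_{i,h}-\lambda_i\le\kappa\,\eta^2(\lambda_{i,h},u_{i,h},\mathbf y_h)$, which rearranges to
\[
\lambda_{i,h}^{L}=\lambda_{i,h}-\kappa\,\eta^2(\lambda_{i,h},u_{i,h},\mathbf y_h)\ \le\ \lambda_i,
\]
as claimed.

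I do not expect any genuine analytic obstacle here; the only point to state carefully is the quantification. The threshold $h_0$ (i.e.\ the meaning of ``$h$ small enough'') depends on $\kappa$ and on the mesh-independent constant $C_i$ of Lemma \ref{Err_Eigen_Global_Lem}, so the bound is only asymptotic rather than fully guaranteed, in contrast with Theorem \ref{Guaranteed_Lower_Bound_Theorem}. Making this dependence explicit, and noting that the same argument applies simultaneously to each fixed index $i$, is all that the proof requires.
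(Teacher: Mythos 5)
Your argument is correct and follows essentially the same route as the paper's own proof: the right-hand inequality of (\ref{Efficiency_Eigenvalue}), the monotonicity $\eta^2(\lambda_{i,h},u_{i,h},\mathbf y_h)\geq\eta^2(\lambda_{i,h},u_{i,h},\mathbf y^*)$ from Lemma \ref{Optimization_Property_Lemma}, and the limit $\theta_{1,i}^2\rightarrow 1$ to absorb the prefactor into $\kappa>1$ for $h$ small. Your added remarks on the quantification of ``$h$ small enough'' and on the fact that no extra hypothesis on $\mathbf y_h$ is needed are accurate refinements of the paper's terser presentation.
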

\begin{proof}
From Lemma \ref{Optimization_Property_Lemma} and (\ref{Efficiency_Eigenvalue}), we have the following
inequalities
\begin{eqnarray*}
\lambda_{i,h}-\lambda_i\leq  \frac{1}{\theta_{1,i}^2}\eta^2(\lambda_{i,h},u_{i,h},\mathbf y^*) \leq
 \frac{1}{\theta_{1,i}^2}\eta^2(\lambda_{i,h},u_{i,h},\mathbf y_h).
\end{eqnarray*}
Combining (\ref{Theta_1_and_2}) and $\eta_a(h)\rightarrow 0$ as $h\rightarrow 0$ leads to
$\theta_{1,i}^2\rightarrow 1$ as  $h\rightarrow 0$. Then the lower bound result
(\ref{Lower_Bound_Eigen_i}) holds when the mesh size $h$ is small enough.
\end{proof}
\begin{remark}\label{Lower_Bound_Eigen_Remark}
It is easy to know that if we choose $\kappa$ closer to $1$, the mesh size $h$ need to be smaller.
For example, we can choose $\kappa =2$ and has the following eigenvalue approximation
\begin{eqnarray*}
\lambda_{i,h}^{L}:=\lambda_{i,h}-2 \eta^2(\lambda_{i,h},u_{i,h},\mathbf y_h),
\end{eqnarray*}
which is a lower bound of the eigenvalue $\lambda_i$ when $h$ is small enough.
\end{remark}
\begin{corollary}\label{Efficiency_h_Eigenvalue_Corollary}
Assume the conditions of Corollary \ref{Lower_Bound_Eigen_Corollary} hold and there exists a constant $\gamma_i$
such that the approximation $\mathbf y_h$ of $\mathbf y^*$ satisfies
$\||\mathbf y^*-\mathbf y_h\||_*\leq \gamma_i\|u_i-u_{i,h}\|_a$. Then the following efficiency holds
\begin{eqnarray}\label{Efficiency_2_Eigenvalue}
\eta^2(\lambda_{i,h},u_{i,h},\mathbf y_h)&\leq&\Big(1+\frac{\gamma_i^2}{\theta_{1,i}^2}\Big)
\frac{\theta_{2,i}^2}{1-\lambda_i C_i^2\eta_a^2(h)}(\lambda_{i,h}-\lambda_i).
\end{eqnarray}
Further, the estimator $\eta^2(\lambda_{i,h},u_{i,h},\mathbf y_h)$ is asymptotically exact for
the eigenvalue error $\lambda_{i,h}-\lambda_i$ if and only if the condition (\ref{Condition_Exact})
holds.
\end{corollary}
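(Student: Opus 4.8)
The plan is to chain the orthogonal decomposition of Lemma \ref{Optimization_Property_Lemma} with the two-sided efficiency estimates already established in Theorems \ref{Efficiency_Theorem} and \ref{Efficiency_Eigenvalue_Theorem}; no new idea is required beyond careful bookkeeping of the constants.

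First I would invoke the identity (\ref{Optimization_Property}) with $\widehat\lambda=\lambda_{i,h}$ and $\widehat u=u_{i,h}$, giving
\begin{eqnarray*}
\eta^2(\lambda_{i,h},u_{i,h},\mathbf y_h)=\eta^2(\lambda_{i,h},u_{i,h},\mathbf y^*)+\||\mathbf y^*-\mathbf y_h\||_*^2.
\end{eqnarray*}
To the last term I apply the hypothesis $\||\mathbf y^*-\mathbf y_h\||_*\le\gamma_i\|u_i-u_{i,h}\|_a$ followed by the lower bound $\theta_{1,i}\|u_i-u_{i,h}\|_a\le\eta(\lambda_{i,h},u_{i,h},\mathbf y^*)$ from (\ref{Efficiency}), so that $\||\mathbf y^*-\mathbf y_h\||_*^2\le(\gamma_i^2/\theta_{1,i}^2)\,\eta^2(\lambda_{i,h},u_{i,h},\mathbf y^*)$, hence $\eta^2(\lambda_{i,h},u_{i,h},\mathbf y_h)\le(1+\gamma_i^2/\theta_{1,i}^2)\,\eta^2(\lambda_{i,h},u_{i,h},\mathbf y^*)$. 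Next I would convert $\eta^2(\lambda_{i,h},u_{i,h},\mathbf y^*)$ into the eigenvalue error by combining the upper bound $\eta(\lambda_{i,h},u_{i,h},\mathbf y^*)\le\theta_{2,i}\|u_i-u_{i,h}\|_a$ from (\ref{Efficiency}) with the inequality $(1-\lambda_iC_i^2\eta_a^2(h))\|u_i-u_{i,h}\|_a^2\le\lambda_{i,h}-\lambda_i$, which is exactly the chain displayed in (\ref{Inequality_7}); this yields $\eta^2(\lambda_{i,h},u_{i,h},\mathbf y^*)\le\frac{\theta_{2,i}^2}{1-\lambda_iC_i^2\eta_a^2(h)}(\lambda_{i,h}-\lambda_i)$, and substituting into the previous display gives precisely (\ref{Efficiency_2_Eigenvalue}).

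For the asymptotic exactness I would return to the decomposition rather than use (\ref{Efficiency_2_Eigenvalue}) directly. Writing
\begin{eqnarray*}
\frac{\eta^2(\lambda_{i,h},u_{i,h},\mathbf y_h)}{\lambda_{i,h}-\lambda_i}
=\frac{\eta^2(\lambda_{i,h},u_{i,h},\mathbf y^*)}{\lambda_{i,h}-\lambda_i}
+\frac{\||\mathbf y^*-\mathbf y_h\||_*^2}{\|u_i-u_{i,h}\|_a^2}\cdot\frac{\|u_i-u_{i,h}\|_a^2}{\lambda_{i,h}-\lambda_i},
\end{eqnarray*}
the first term tends to $1$ by (\ref{Exactness_Eigenvalue}), and the last fraction tends to $1$ because (\ref{Inequality_7}) and (\ref{Inequality_8}) squeeze $\lambda_{i,h}-\lambda_i$ between $(1-\lambda_iC_i^2\eta_a^2(h))\|u_i-u_{i,h}\|_a^2$ and $\|u_i-u_{i,h}\|_a^2$ with $\eta_a(h)\to0$. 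Hence the left-hand ratio converges to $1$ if and only if $\||\mathbf y^*-\mathbf y_h\||_*^2/\|u_i-u_{i,h}\|_a^2\to0$, i.e. if and only if (\ref{Condition_Exact}) holds.

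The one point demanding a little care is the ``only if'' direction: since the quantities above need not actually converge, I would argue via subsequences — along any subsequence on which $\||\mathbf y^*-\mathbf y_h\||_*/\|u_i-u_{i,h}\|_a$ stays bounded away from $0$, the displayed ratio stays bounded away from $1$ (all summands being nonnegative, the extra term can only push it above $1$), contradicting asymptotic exactness. This is the main, and quite mild, obstacle; every other step is a direct substitution of the inequalities proved earlier in the paper.
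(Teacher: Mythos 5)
Your proof is correct and follows essentially the same route as the paper: the identity (\ref{Optimization_Property}), the hypothesis on $\||\mathbf y^*-\mathbf y_h\||_*$ combined with the lower bound in (\ref{Efficiency}), and then (\ref{Efficiency_Eigenvalue}) to convert to the eigenvalue error, in exactly the paper's order. Your treatment of the ``if and only if'' claim is actually more careful than the paper's, which simply asserts that it ``follows immediately''; your decomposition and subsequence argument supply the detail the paper omits.
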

\begin{proof}
First from (\ref{Optimization_Property}), (\ref{Efficiency}) and (\ref{Efficiency_Eigenvalue}),
we have the following estimates
\begin{eqnarray*}
\eta^2(\lambda_{i,h},u_{i,h},\mathbf y_h)&=&\eta^2(\lambda_{i,h},u_{i,h},\mathbf y^*)
+\||\mathbf y^*-\mathbf y_h\||_*^2\nonumber\\
&\leq& \eta^2(\lambda_{i,h},u_{i,h},\mathbf y^*)+\gamma_i^2\|u_i-u_{i,h}\|_a^2\nonumber\\
&\leq& \eta^2(\lambda_{i,h},u_{i,h},\mathbf y^*)+\frac{\gamma_i^2}{\theta_{1,i}^2}
\eta^2(\lambda_{i,h},u_{i,h},\mathbf y^*)\nonumber\\
&\leq&\Big(1+\frac{\gamma_i^2}{\theta_{1,i}^2}\Big)\eta^2(\lambda_{i,h},u_{i,h},\mathbf y^*)\nonumber\\
&\leq&\Big(1+\frac{\gamma_i^2}{\theta_{1,i}^2}\Big)
\frac{\theta_{2,i}^2}{1-\lambda_i C_i^2\eta_a^2(h)}(\lambda_{i,h}-\lambda_i).
\end{eqnarray*}
This is the desired result (\ref{Efficiency_2_Eigenvalue}) and the asymptotically exactness
result follows immediately from the condition (\ref{Condition_Exact}).
\end{proof}
\begin{remark}
From Corollaries \ref{Efficiency_h_Eigenfun_Corollary} and \ref{Efficiency_h_Eigenvalue_Corollary},
the estimators
$\eta(\lambda_{i,h},u_{i,h},\mathbf y_h)$ and $\eta^2(\lambda_{i,h},u_{i,h},\mathbf y_h)$
are asymptotically
exact for $\|u-u_{i,h}\|_a$ and $\lambda_{i,h}-\lambda_i$, respectively,
when the condition $\lim\limits_{h\rightarrow 0}\gamma_i=0$ holds.
\end{remark}
\section{Numerical results}\label{Section_Numerical_Examples}
In this section, two numerical examples are presented to validate the efficiency of the
posteriori estimate, the upper bound of the error estimate and lower bound of the first eigenvalue
proposed in this paper.

In order to give the a posteriori error estimate $\eta(\lambda_{i,h},u_{i,h},\mathbf y_h)$, we need to
solve the dual problem (\ref{Dual_Problem}) to produce the approximation $\mathbf y_h$ of $\mathbf y^*$.
Here, the dual problem (\ref{Dual_Problem}) is solved
using the same mesh $\mathcal{T}_h$.
We solve the dual problem (\ref{Dual_Problem})
to obtain an approximation $\mathbf y_h^*\in \mathbf W_h\subset \mathbf W$ with the $H({\rm div};\Omega)$
conforming finite element space $\mathbf W_h$ defined as follows \cite{BrezziFortin}
\begin{eqnarray}
\mathbf W_h^p=\big\{\mathbf w\in \mathbf W:\ \mathbf w|_K\in {\rm RT}_p,\ \forall K\in \mathcal{T}_h\big\},
\end{eqnarray}
where ${\rm RT}_p= (\mathcal{P}_p)^d+\mathbf x\mathcal{P}_p$. Then the approximate solution $\mathbf y_h^p\in \mathbf W_h^p$
of the dual problem (\ref{Dual_Problem}) is defined as follows: Find $\mathbf y_h^* \in \mathbf W_h^p$ such that
\begin{eqnarray}\label{Dual_Problem_Discrete}
a^*(\mathbf y_h^*,\mathbf z_h)&=&\mathcal{F}^*(\lambda_{i,h},u_{i,h},\mathbf z_h),\ \ \ \forall \mathbf z_h\in \mathbf W_h^p.
\end{eqnarray}
After obtaining $\mathbf y_h^*$, we can compute the a posteriori error estimate $\eta(\lambda_{i,h},u_{i,h},\mathbf y_h^*)$
as in (\ref{Definition_Eta}).

We can obtain the lower bound $\lambda_{2,h}^L$ of the second eigenvalue $\lambda_2$ by the nonconforming
finite element method from the papers \cite{CarstensenGedicke,Liu,SebestovaVejchodsky}. Based on $\lambda_{2,h}^L$,
we can compute the guaranteed upper bound of the error estimate for the  first eigenfunction approximation $u_{1,h}$ as
\begin{eqnarray*}
\eta_h^U(\lambda_{1,h},u_{1,h},\mathbf y_h^*):=
\frac{\lambda_{2,h}^L}{\lambda_{2,h}^L-\lambda_{1,h}}\eta(\lambda_{1,h},u_{1,h},\mathbf y_h^*),
\end{eqnarray*}
and the guaranteed lower bound of the first eigenvalue $\lambda_1$ as follows
\begin{eqnarray*}
\lambda_{1,h}^L:= \lambda_{1,h}-\left(\frac{\lambda_{2}^L}{\lambda_{2}^L-\widehat\lambda_1}\right)\frac{\lambda_2^L}{\lambda_2^L-\alpha^2\eta^2(\widehat\lambda_1,\widehat u_1,\mathbf y_h)}
\eta^2(\widehat\lambda_1,\widehat u_1,\mathbf y_h)
\leq  \lambda_1,
\end{eqnarray*}
where $\alpha=\lambda_{2}^L/(\lambda_{2}^L-\lambda_{1,h})$. 

In this paper, we solve the eigenvalue problem by the multigrid method from the papers \cite{Xie_JCP,Xie_IMA}
which only needs the optimal memory and computational complexity.

\subsection{Eigenvalue problem on unit square}
In the first example, we solve the eigenvalue problem (\ref{weak_eigenvalue_problem})
on the unit square $\Omega=(0,1)\times (0,1)$. In order to investigate the efficiency of the
a posteriori error estimate $\eta(\lambda_{i,h},u_{i,h},\mathbf y_h^*)$, the guaranteed upper bound
$\eta_h^U(\lambda_{1,h},u_{1,h},\mathbf y_h^*)$
of the error estimate $\|u_1-u_{1,h}\|_a$ and the lower bound $\lambda_{1,h}^L$ of the first eigenvalue $\lambda_1$,
we produce the sequence of finite element spaces on the sequence of meshes
which are obtained by the regular refinement (connecting the midpoints of each edge) from an initial
mesh. In this example, the initial mesh is showed in Figure \ref{Exam_1_Initial_Mesh}
which is generated by Delaunay method.

First we solve the eigenvalue
problem (\ref{Weak_Eigenvalue_Discrete})
by the linear conforming finite element method and solve the dual problem (\ref{Dual_Problem_Discrete})
in the finite element space $\mathbf W_h^0$ and $\mathbf W_h^1$, respectively.
The corresponding numerical results are presented in Figure \ref{Exam_1_P_1_RT0_RT1} which
shows that the a posteriori error estimate $\eta(\lambda_{1,h},u_{1,h},\mathbf y_h^*)$
is efficient when we solve the dual problem by  $\mathbf W_h^1$. Figure \ref{Exam_1_P_1_RT0_RT1} also shows
the validation of the guaranteed upper bound $\eta_h^U(\lambda_{1,h},u_{1,h},\mathbf y_h^*)$ for the error $\|u_1-u_{1,h}\|_a$
and the eigenvalue approximation $\lambda_{1,h}^L$ is really a guaranteed lower bound for the first eigenvalue
 $\lambda_1=1+2\pi^2$ despite the way to solve the dual problem by $\mathbf W_h^0$ or $\mathbf W_h^1$.

\begin{figure}[htbp]
\centering
\includegraphics[width=5.5cm,height=5.5cm]{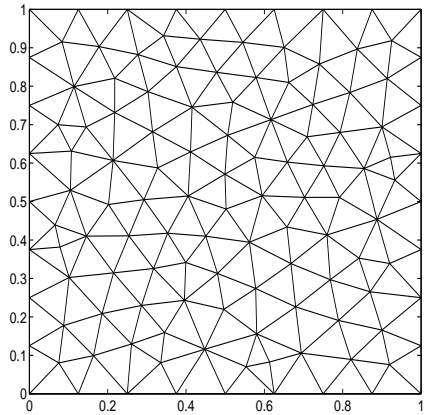}
\caption{\small\texttt The initial mesh for the unit square} \label{Exam_1_Initial_Mesh}
\end{figure}

\begin{figure}[ht]
\centering
\includegraphics[width=6.5cm,height=6cm]{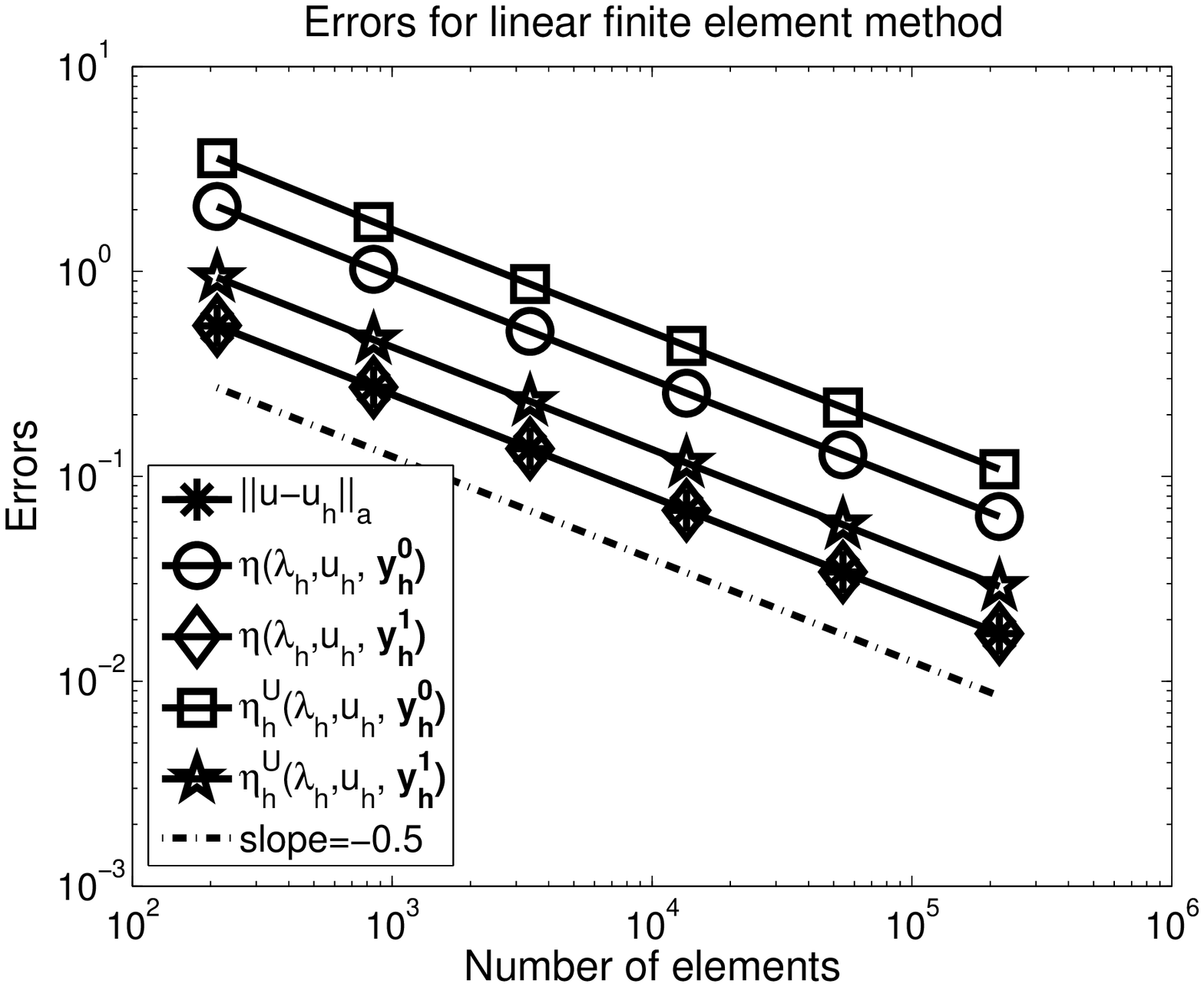}
\includegraphics[width=6.5cm,height=6cm]{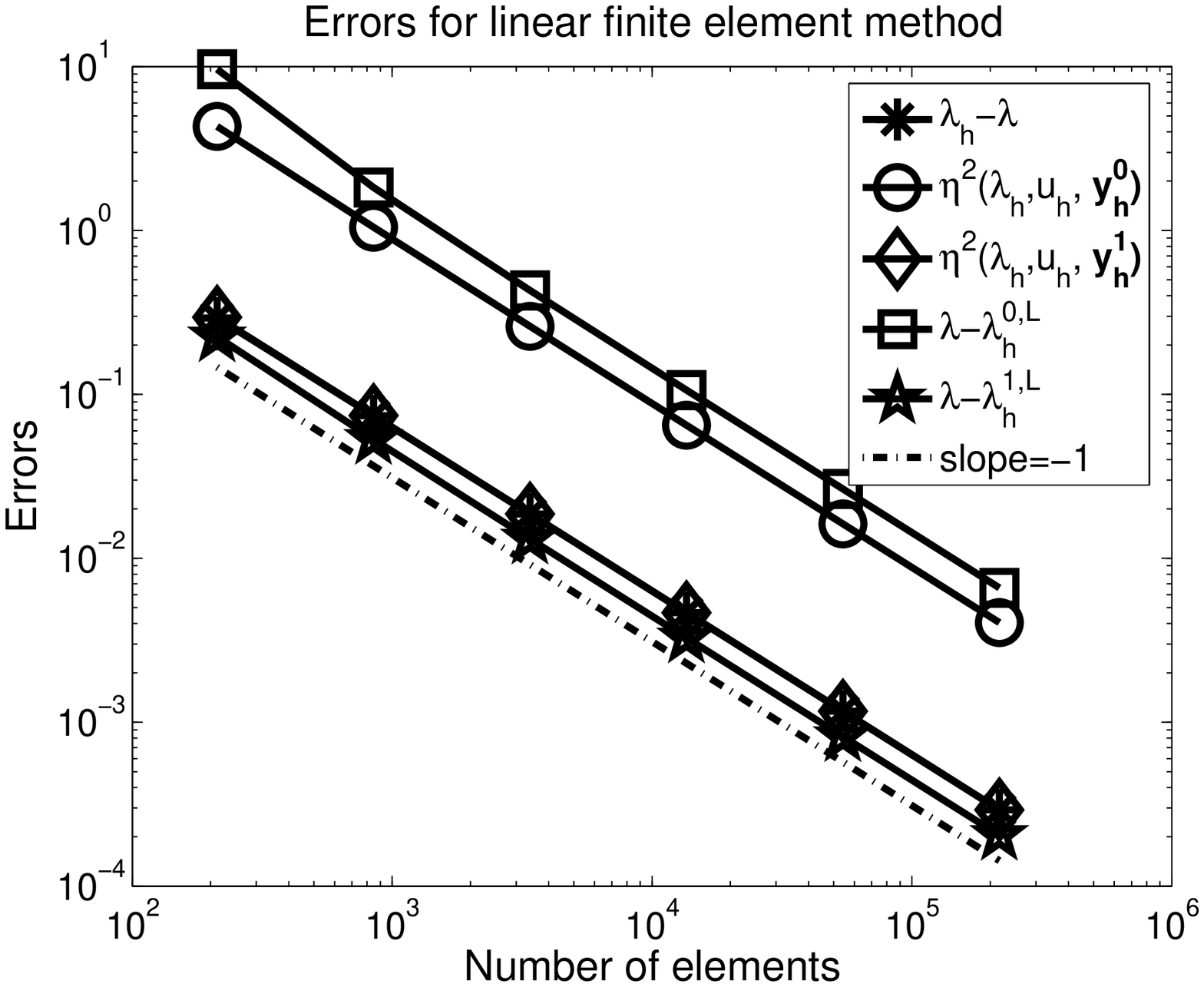}
\caption{\small\texttt The errors for the unit square domain when the eigenvalue problem is solved
by the linear finite element method, where $\eta(\lambda_h,u_h,\mathbf y_h^0)$ and
$\eta(\lambda_h,u_h,\mathbf y_h^1)$ denote the a posteriori error estimates $\eta(\lambda_{1,h},u_{1,h},\mathbf y_h^*)$
when the dual problem is solved by $\mathbf W_h^0$ and $\mathbf W_h^1$, respectively, and $\lambda_h^{0,L}$ and
$\lambda_h^{1,L}$ denote the guaranteed lower bounds of the first eigenvalue $\lambda_1$ when the dual problem is
solved by $\mathbf W_h^0$ and $\mathbf W_h^1$, respectively}
\label{Exam_1_P_1_RT0_RT1}
\end{figure}

We also solve the eigenvalue problem (\ref{Weak_Eigenvalue_Discrete}) by the quadratic
finite element method and solve the dual problem (\ref{Dual_Problem_Discrete})
with the finite element space $\mathbf W_h^1$ and $\mathbf W_h^2$, respectively.
Figure \ref{Exam_1_P_2_RT1_RT2} shows the corresponding numerical results. From Figure \ref{Exam_1_P_2_RT1_RT2},
we can find that the a posteriori error estimate $\eta(\lambda_{1,h},u_{1,h},\mathbf y_h^*)$ is efficient
when we solve the dual problem by  $\mathbf W_h^2$. Figure \ref{Exam_1_P_2_RT1_RT2} also shows
$\eta_h^U(\lambda_{1,h},u_{1,h},\mathbf y_h^*)$  is really the guaranteed upper bound of the error $\|u_1-u_{1,h}\|_a$
and the eigenvalue approximation $\lambda_{1,h}^L$ is also really a guaranteed lower bound of the first eigenvalue
 $\lambda_1$.

\begin{figure}[ht]
\centering
\includegraphics[width=6.5cm,height=6cm]{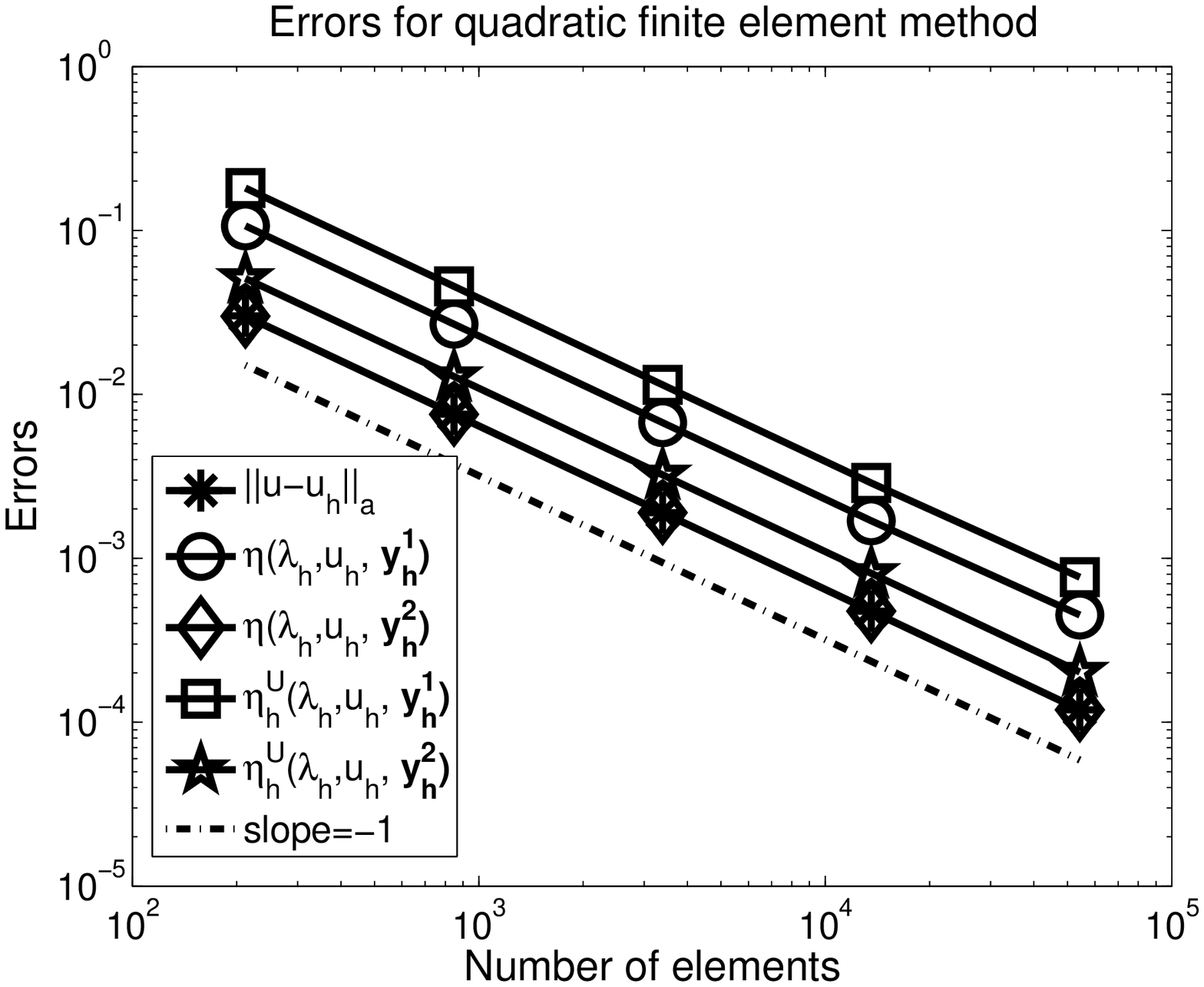}
\includegraphics[width=6.5cm,height=6cm]{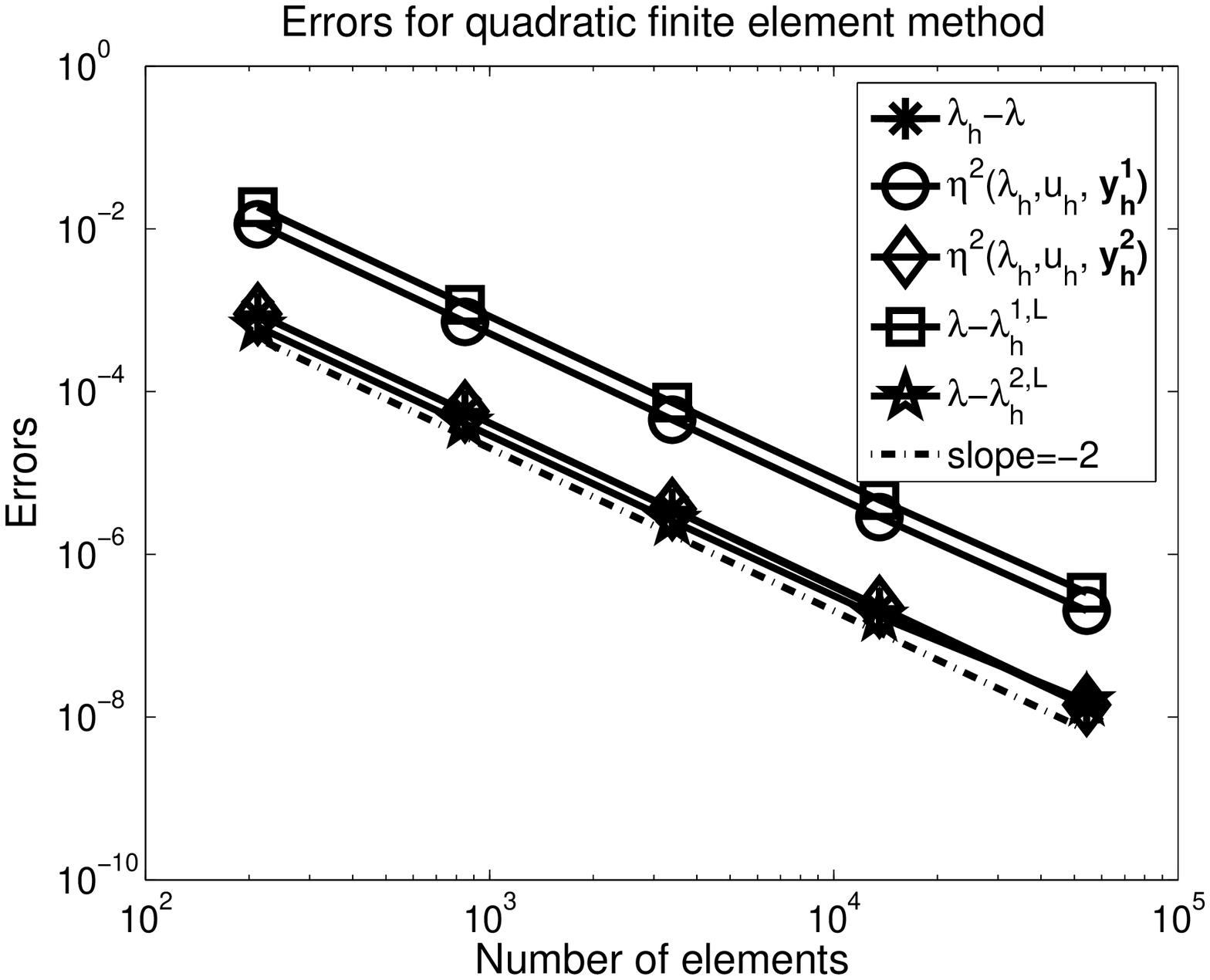}
\caption{\small\texttt The errors for the unit square domain when the eigenvalue problem is solved
by the quadratic finite element method, where $\eta(\lambda_h,u_h,\mathbf y_h^1)$ and
$\eta(\lambda_h,u_h,\mathbf y_h^2)$ denote the a posteriori error estimates $\eta(\lambda_{1,h},u_{1,h},\mathbf y_h^*)$
when the dual problem is solved by $\mathbf W_h^1$ and $\mathbf W_h^2$, respectively, and $\lambda_h^{1,L}$ and
$\lambda_h^{2,L}$ denote the guaranteed lower bounds of the first eigenvalue $\lambda_1$ when the dual problem is
solved by $\mathbf W_h^1$ and $\mathbf W_h^2$, respectively}
\label{Exam_1_P_2_RT1_RT2}
\end{figure}

In this section, we also check the efficiency of the error estimates $\eta^2(\lambda_{i,h},u_{i,h},\mathbf y_h^*)$
($i=2,3$) for the second and third eigenvalues. Tables \ref{Example_1_Table_1} and \ref{Example_1_Table_2} show
the corresponding numerical results. In Table \ref{Example_1_Table_1}, we solve the eigenvalue
problem (\ref{Weak_Eigenvalue_Discrete}) by the linear finite element method and the
dual problem (\ref{Dual_Problem_Discrete}) with the finite element space $\mathbf W_h^0$
and $\mathbf W_h^1$, respectively.
In Table \ref{Example_1_Table_2}, the eigenvalue problem (\ref{Weak_Eigenvalue_Discrete}) is solved
by the quadratic finite element method and we solve  the dual problem (\ref{Dual_Problem_Discrete})
with the finite element space $\mathbf W_h^1$ and $\mathbf W_h^2$, respectively.
\begin{table}[ht]
\centering
\caption{\footnotesize\texttt The errors for the unit square domain when the eigenvalue problem is solved
by the linear finite element method, where $\eta(\lambda_{i,h},u_{i,h},\mathbf y_h^0)$ ($i=2,3$) and
$\eta(\lambda_{i,h},u_{i,h},\mathbf y_h^1)$ denote the a posteriori error estimates $\eta(\lambda_{i,h},u_{i,h},\mathbf y_h^*)$
when the dual problem is solved by $\mathbf W_h^0$ and $\mathbf W_h^1$, respectively.}\label{Example_1_Table_1}
\begin{tabular}{||c|c|c|c||}
\hline
\footnotesize{Number of elements} & $\lambda_{2,h}-\lambda_2$& $\eta^2(\lambda_{2,h},u_{2,h},\mathbf y_h^0)$  & $\eta^2(\lambda_{2,h},u_{2,h},\mathbf y_h^1)$ \\
\hline
208    &     1.9304e+00 & 7.2113e+01 & 1.9875e+00  \\
\hline
832    &    4.8497e-01 & 1.6651e+01 & 4.8866e-01  \\
\hline
3328   &    1.2164e-01 & 4.0794e+00 & 1.2188e-01  \\
\hline
13312  &    3.0450e-02 & 1.0147e+00 & 3.0469e-02  \\
\hline
53248  &    7.6161e-03 & 2.5337e-01 & 7.6182e-03  \\
\hline
212992 &   1.9043e-03 & 6.3322e-02 & 1.9047e-03  \\
\hline
\footnotesize{Number of elements} &  $\lambda_{3,h}-\lambda_3$& $\eta^2(\lambda_{3,h},u_{3,h},\mathbf y_h^0)$  & $\eta^2(\lambda_{3,h},u_{3,h},\mathbf y_h^1)$ \\
\hline
208    & 1.9386e+00 & 7.0685e+01  &1.9968e+00\\
\hline
832    & 4.8728e-01 & 1.6198e+01  &4.9098e-01\\
\hline
3328   & 1.2227e-01 & 3.9655e+00  &1.2252e-01\\
\hline
13312  & 3.0615e-02 & 9.8627e-01  &3.0634e-02\\
\hline
53248  & 7.6578e-03 & 2.4625e-01  &7.6599e-03\\
\hline
212992 & 1.9148e-03 & 6.1543e-02  &1.9151e-03\\
\hline
\end{tabular}
\end{table}

The numerical results in Tables \ref{Example_1_Table_1} and \ref{Example_1_Table_2}
show that $\eta^2(\lambda_{i,h},u_{i,h},\mathbf y_h^*)$ ($i=2,3$) is a very efficient error estimator for the
eigenvalue approximation $\lambda_{i,h}$ when the error of the dual problem is small compared to the
error of the primitive problem. This phenomena is in agreement with Theorem \ref{Efficiency_Eigenvalue_Theorem},
Corollary \ref{Lower_Bound_Eigen_Corollary} and Remark \ref{Lower_Bound_Eigen_Remark}.

\begin{table}[ht]
\centering
\caption{\footnotesize\texttt The errors for the unit square domain when the eigenvalue problem is solved
by the quadratic finite element method, where $\eta(\lambda_{i,h},u_{i,h},\mathbf y_h^0)$ ($i=2,3$) and
$\eta(\lambda_{i,h},u_{i,h},\mathbf y_h^1)$ denote the a posteriori error estimates
$\eta(\lambda_{i,h},u_{i,h},\mathbf y_h^*)$ when the dual problem is solved by $\mathbf W_h^0$ and $\mathbf W_h^1$, respectively.}\label{Example_1_Table_2}
\begin{tabular}{||c|c|c|c||}
\hline
\footnotesize{Number of elements} & $\lambda_{2,h}-\lambda_2$& $\eta^2(\lambda_{2,h},u_{2,h},\mathbf y_h^0)$  & $\eta^2(\lambda_{2,h},u_{2,h},\mathbf y_h^1)$ \\
\hline
208    &    1.3955e-02 &   4.6633e-01 &   1.3818e-02   \\
\hline
832    &    9.0239e-04 &   2.9777e-02 &   9.0013e-04   \\
\hline
3328   &    5.7163e-05 &   1.8719e-03 &   5.7128e-05   \\
\hline
13312  &    3.5934e-06 &   1.1718e-04 &   3.5928e-06   \\
\hline
53248  &    2.2519e-07 &   7.3268e-06 &   2.2519e-07   \\
\hline
\footnotesize{Number of elements} &  $\lambda_{3,h}-\lambda_3$& $\eta^2(\lambda_{3,h},u_{3,h},\mathbf y_h^0)$  & $\eta^2(\lambda_{3,h},u_{3,h},\mathbf y_h^1)$ \\
\hline
208    &       1.4340e-02 &  4.6548e-01 &  1.4193e-02  \\
\hline
832    &       9.2527e-04 &  2.9950e-02 &  9.2287e-04  \\
\hline
3328   &       5.8616e-05 &  1.8858e-03 &  5.8578e-05  \\
\hline
13312  &       3.6855e-06 &  1.1809e-04 &  3.6849e-06  \\
\hline
53248  &       2.3101e-07 &  7.3849e-06 &  2.3100e-07  \\
\hline
\end{tabular}
\end{table}

\subsection{Eigenvalue problem on L-shape domain}
In the second example, we solve the eigenvalue problem (\ref{weak_eigenvalue_problem})
on the L-shape domain $\Omega=(-1,1)\times (-1,1)/[0,1)\times (-1,0]$.
 Since $\Omega$ has a re-entrant corner, the singularity of the first eigenfunction
is expected. The convergence order for the eigenvalue approximation is
less than $2$ by the linear finite element method which is the order predicted by the
theory for regular eigenfunctions.
We investigate the numerical results for the first eigenvalue. Since the exact
eigenvalue is not known, we choose an adequately accurate approximation
$\lambda_1 = 10.6397238440219$ obtained by the extrapolation method \cite{LinLin}
as the exact first eigenvalue for the numerical tests.
In order to treat the singularity of the eigenfunction, we solve the eigenvalue problem
(\ref{weak_eigenvalue_problem}) by the adaptive finite element method (cf. \cite{BrennerScott}).
For simplicity, we set $\lambda:=\lambda_1$, $u:=u_1$, $\lambda_h:=\lambda_{1,h}$ and $u_h:=u_{1,h}$
in this subsection.

We present this example to validate the results in this paper also hold on the adaptive meshes.
In order to use the adaptive finite element method, we define the a posteriori error estimator as follows:
Define the element residual $\mathcal{R}_K(\lambda_h,u_h)$ and the jump residual $\mathcal{J}_E(u_h)$ as
follows:
\begin{eqnarray}
\mathcal{R}_K(\lambda_h,u_h)&:=&\lambda_hu_h
 +\Delta u_h- u_h \ \ \text{in}\ K\in \mathcal{T}_h,\\
\mathcal{J}_E(u_h)&:=&-\nabla u_h^+ \cdot \nu^+- \nabla u_h^- \cdot \nu^-
:=[[\nabla u_h]]_E\cdot\nu_E\ \ \ \text{on}\  E\in \mathcal{E}_h,
\end{eqnarray}
where $E$ is the common side of elements $K^{+}$ and $K^-$ with outward
normals $\nu^+$ and $\nu^-$, $\nu_E=\nu^-$.

For each element $K\in \mathcal{T}_h$, we define the local error
indicator $\eta_h(\lambda_h,u_h,K)$ by
\begin{eqnarray}\label{eta_definition}
\eta_h^2(\lambda_h,u_h,K):=h_T^2\|\mathcal{R}_K(\lambda_h,u_h)\|_{0,K}^2
+\sum\limits_{E\in \mathcal{E}_h,E\subset
 \partial K}h_E\|\mathcal{J}_E(u_h)\|^2_{0,E}.
\end{eqnarray}
Then we define the global a posteriori error estimator
$\eta_{\rm ad}(\lambda_h,u_h)$ by
\begin{eqnarray}
\eta_{\rm ad}(\lambda_h,u_h):=
\left(\sum_{K\in \mathcal{T}_h}\eta_h^2(\lambda_h,u_h,K)\right)^{1/2}.
\end{eqnarray}

We solve the eigenvalue
problem (\ref{Weak_Eigenvalue_Discrete})
by the linear conforming finite element method and solve the dual problem (\ref{Dual_Problem_Discrete})
in the finite element space $\mathbf W_h^0$ and $\mathbf W_h^1$, respectively.
Figure \ref{Mesh_AFEM_Exam_2} (left) shows the corresponding adaptive mesh.
The corresponding numerical results are presented in Figure \ref{Exam_2_P_1_RT0_RT1}
which shows that the a posteriori error estimate $\eta(\lambda_h,u_h,\mathbf y_h^*)$ is also efficient
even on the adaptive meshes when we solve the dual problem by  $\mathbf W_h^1$.
Figure \ref{Exam_2_P_1_RT0_RT1} also shows the validation of the guaranteed upper bound
 $\eta_h^U(\lambda_h,u_h,\mathbf y_h^*)$ for the error $\|u-u_h\|_a$ and the eigenvalue approximation $\lambda_h^L$
 is really a guaranteed lower bound of the first eigenvalue despite the way to
 solve the dual problem by $\mathbf W_h^0$ or $\mathbf W_h^1$.

\begin{figure}[ht]
\centering
\includegraphics[width=5cm,height=5cm]{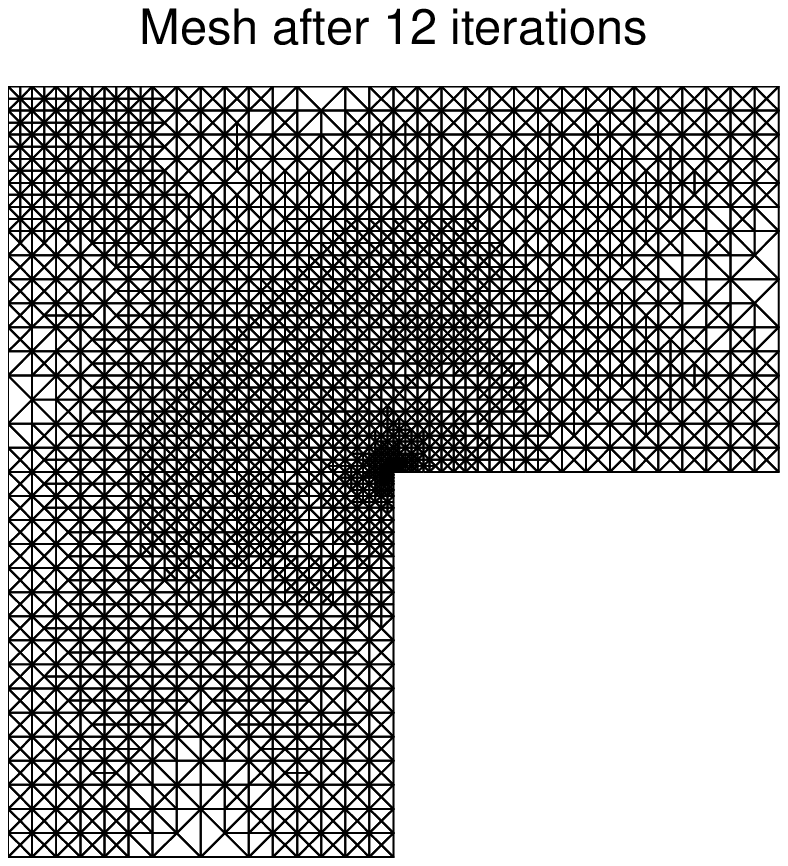}
\includegraphics[width=5cm,height=5cm]{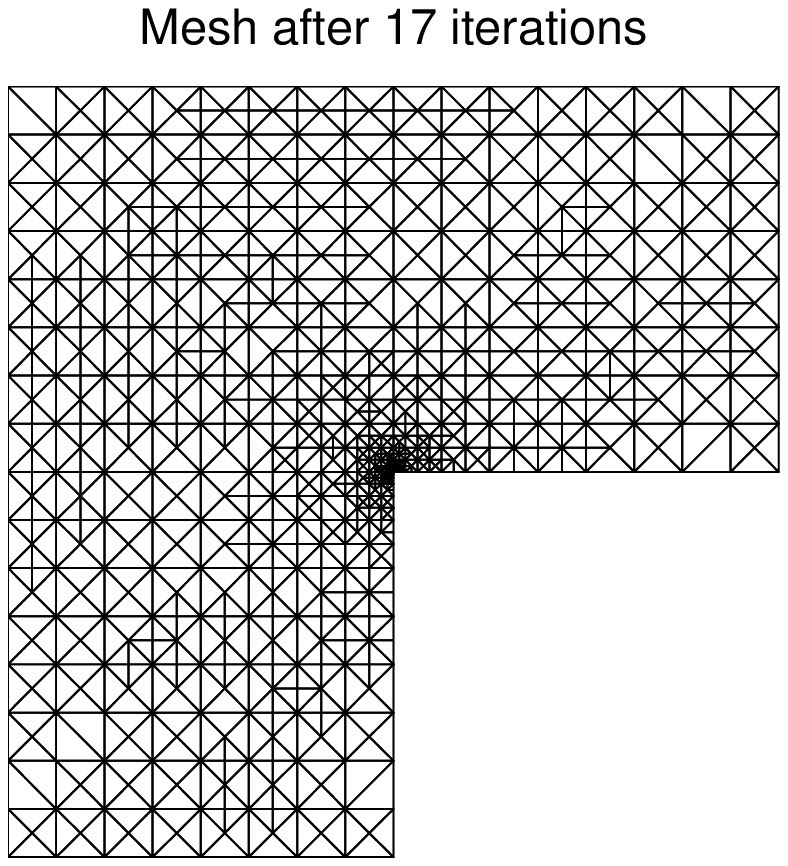}
\caption{The triangulations after adaptive iterations for
L-shape domain by the linear element (left) and the quadratic element (right)}\label{Mesh_AFEM_Exam_2}
\end{figure}

\begin{figure}[ht]
\centering
\includegraphics[width=6.5cm,height=6cm]{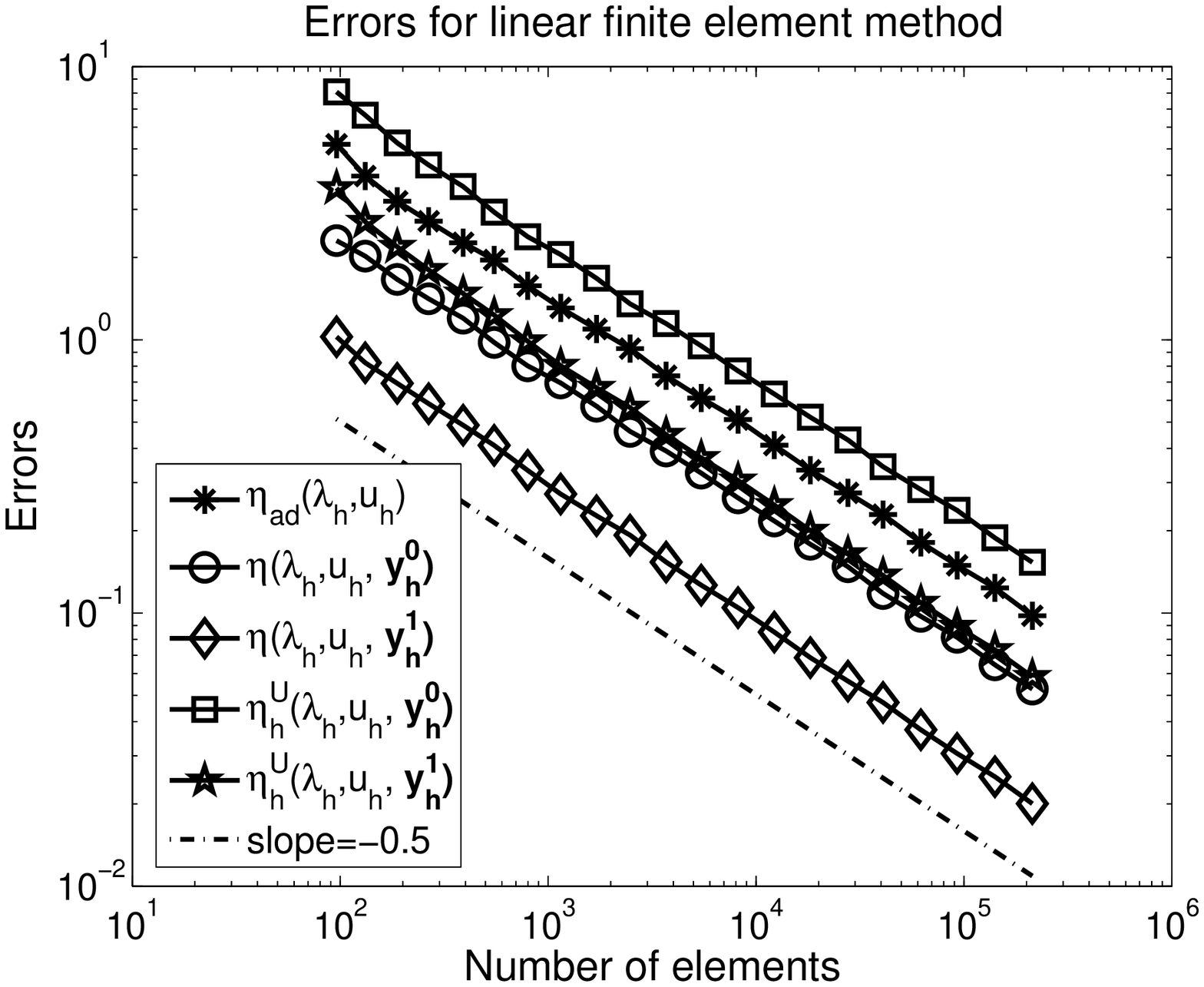}
\includegraphics[width=6.5cm,height=6cm]{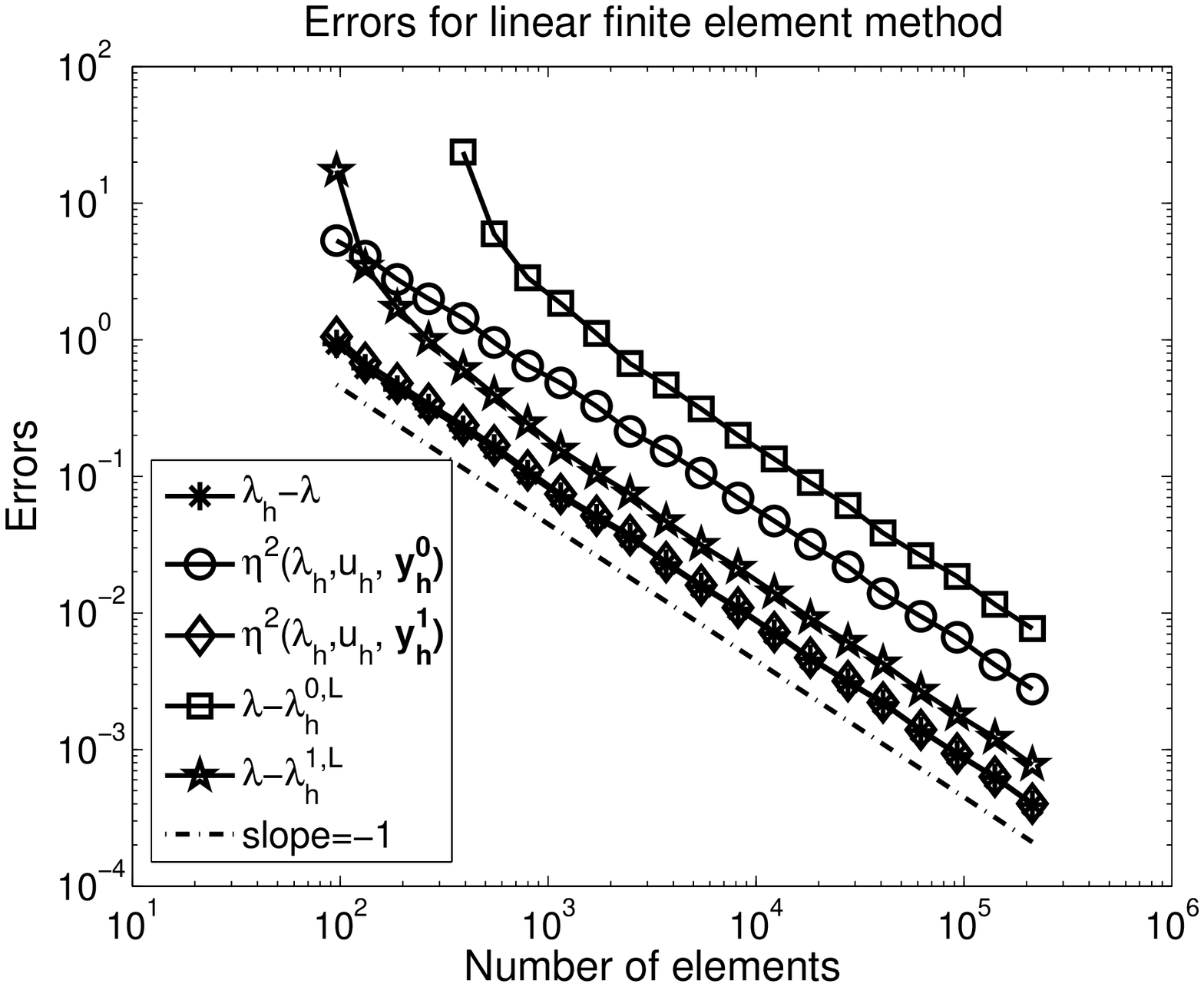}
\caption{\small\texttt The errors for the L-shape domain when the eigenvalue problem is solved
by the linear finite element method, where $\eta(\lambda_h,u_h,\mathbf y_h^0)$ and
$\eta(\lambda_h,u_h,\mathbf y_h^1)$ denote the a posteriori error estimates $\eta(\lambda_h,u_h,\mathbf y_h^*)$
when the dual problem is solved by $\mathbf W_h^0$ and $\mathbf W_h^1$, respectively, and $\lambda_h^{0,L}$ and
$\lambda_h^{1,L}$ denote the guaranteed lower bounds of the first eigenvalue when the dual problem is
solved by $\mathbf W_h^0$ and $\mathbf W_h^1$, respectively}
\label{Exam_2_P_1_RT0_RT1}
\end{figure}

In this example, we also solve the eigenvalue problem (\ref{Weak_Eigenvalue_Discrete}) by the quadratic
finite element method and the dual problem (\ref{Dual_Problem_Discrete})
with the finite element space $\mathbf W_h^1$ and $\mathbf W_h^2$, respectively. The corresponding adaptive mesh
is presented in Figure \ref{Mesh_AFEM_Exam_2} (right).
Figure \ref{Exam_2_P_2_RT1_RT2} shows the corresponding numerical results. From Figure \ref{Exam_2_P_2_RT1_RT2},
we can find that the a posteriori error estimate $\eta(\lambda_h,u_h,\mathbf y_h^*)$ is efficient
when we solve the dual problem by  $\mathbf W_h^2$. Figure \ref{Exam_2_P_2_RT1_RT2} also shows
$\eta_h^U(\lambda_h,u_h,\mathbf y_h^*)$ is really the guaranteed upper bound of the error $\|u-u_h\|_a$
and the eigenvalue approximation $\lambda_h^L$ is also really a guaranteed lower bound of the first eigenvalue.

\begin{figure}[ht]
\centering
\includegraphics[width=6.5cm,height=6cm]{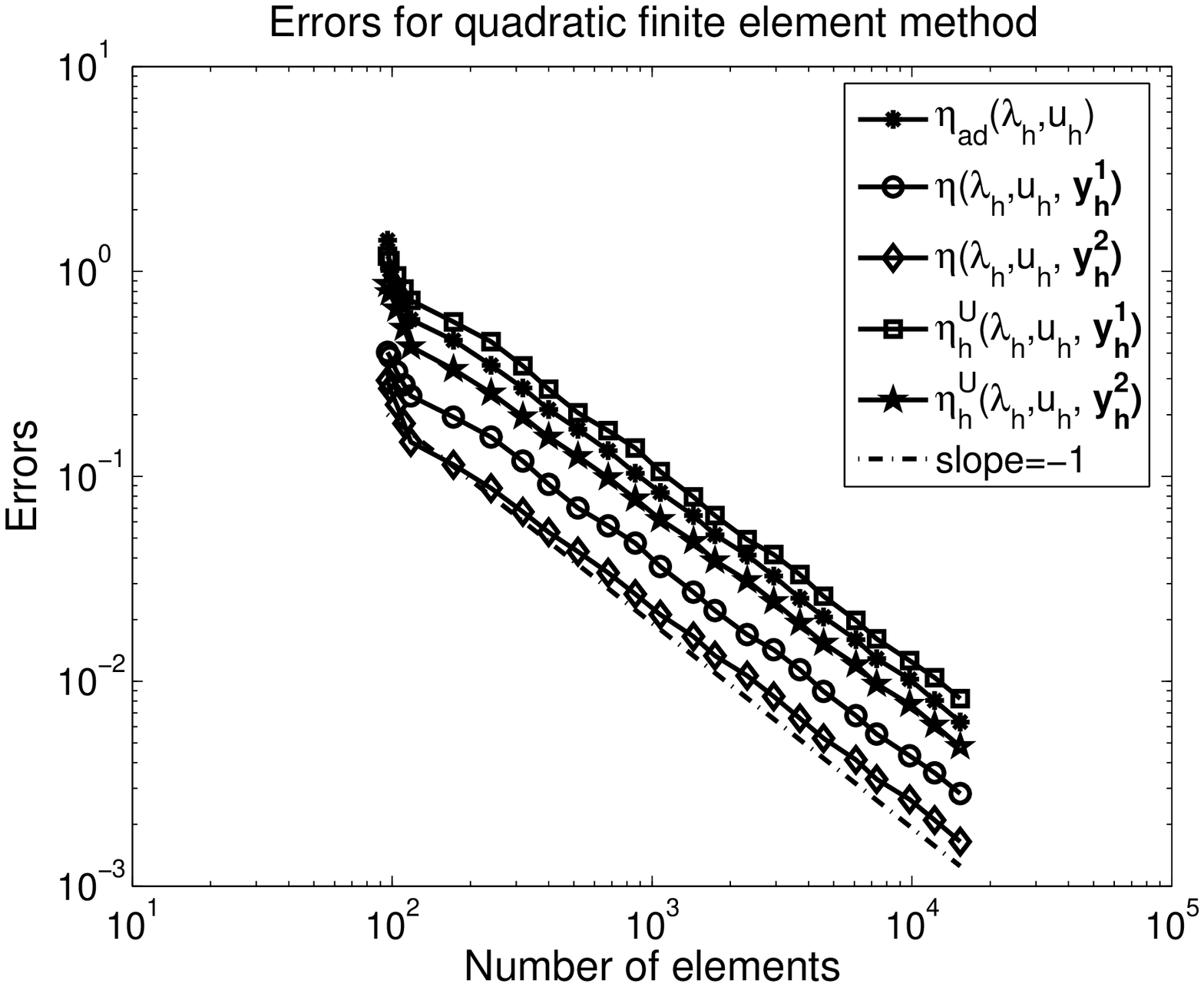}
\includegraphics[width=6.5cm,height=6cm]{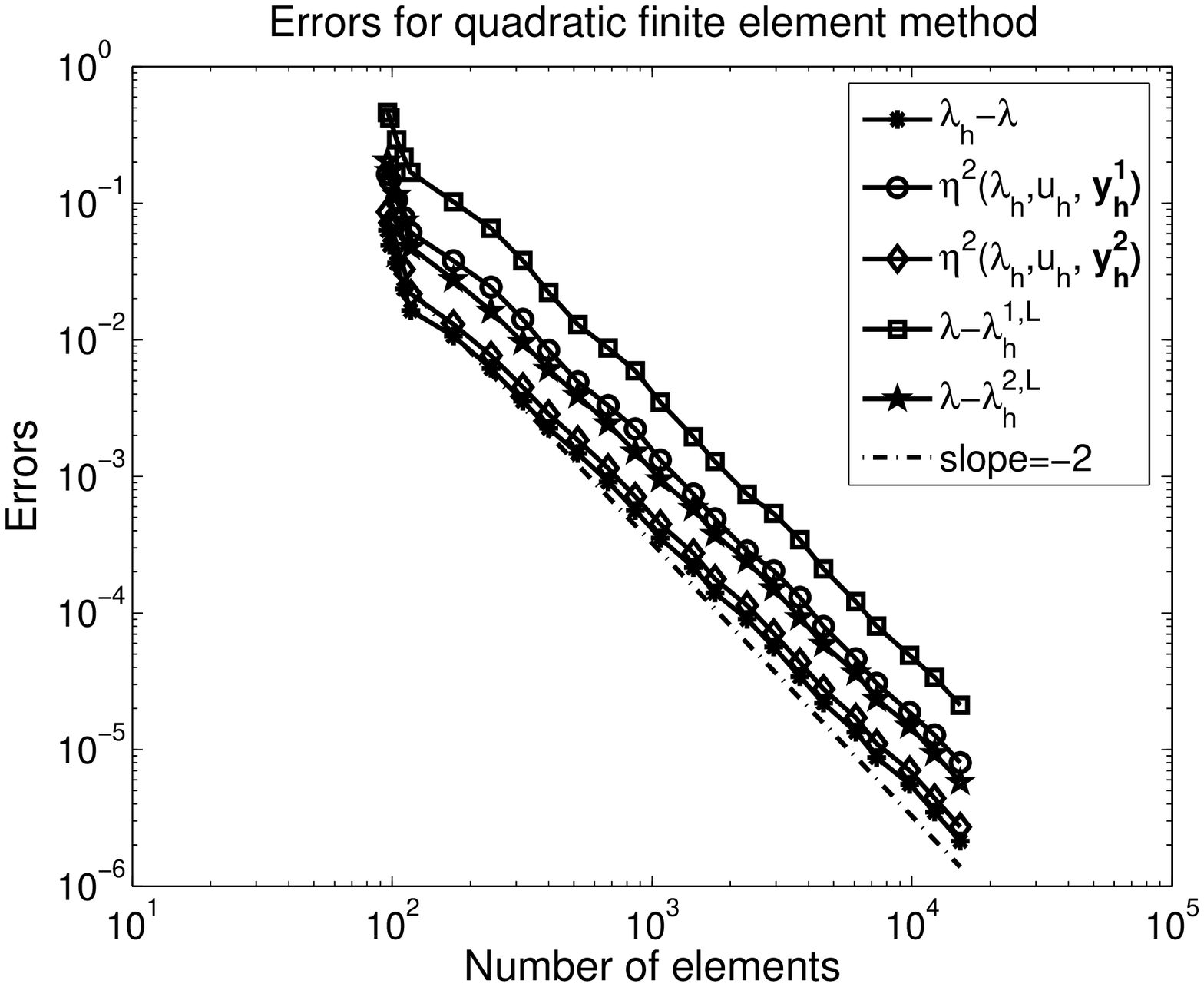}
\caption{\small\texttt The errors for the L shape domain when the eigenvalue problem is solved
by the quadratic finite element method, where $\eta(\lambda_h,u_h,\mathbf y_h^1)$ and
$\eta(\lambda_h,u_h,\mathbf y_h^2)$ denote the a posteriori error estimates $\eta(\lambda_h,u_h,\mathbf y_h^*)$
when the dual problem is solved by $\mathbf W_h^1$ and $\mathbf W_h^2$, respectively, and $\lambda_h^{1,L}$ and
$\lambda_h^{2,L}$ denote the guaranteed lower bounds of the first eigenvalue when the dual problem is
solved by $\mathbf W_h^1$ and $\mathbf W_h^2$, respectively}
\label{Exam_2_P_2_RT1_RT2}
\end{figure}

\section{Concluding remarks}
In this paper, we give a computable error estimate for the eigenpair approximation by the
general conforming  finite element methods on general meshes. Furthermore, the guaranteed
upper bound of the error estimate for the first eigenfunction approximation and the lower
bound of the first eigenvalue can be obtained by the computable error estimate and a
lower bound of the second eigenvalue. If the eigenpair approximations are obtained by solving
the discrete eigenvalue problem, the computable error estimates are asymptotically exact and
we can also give asymptotically lower bounds for the general eigenvalues. Some numerical examples
are provided to demonstrate the validation of the guaranteed upper and lower bounds for the
general conforming finite element methods on the general meshes (quasi-uniform and regular types
 \cite{BrennerScott,Ciarlet}).
The method here can be extended to other eigenvalue problems such as Steklov, Stokes and
other similar types \cite{LinXie,SebestovaVejchodsky}.
Especially, we would like to say that the computable error estimate can be extended to the nonlinear
 eigenvalue problems
which are produced from the complicated linear eigenvalue problems. Furthermore, the method in this
paper can be used to check the modeling and discretization errors for the models
(nonlinear eigenvalue problems) in the density functional theory comes from the linear Schr\"{o}dinger
equation \cite{KohnSham,Martin}. These will be our future work.

\section*{Acknowledge}
We would like to thank Tom\'{a}\v{s} Vejchodsk\'{y} for his kindly discussion!


\end{document}